\chardef\coloryes=0 
\chardef\isitdraft=0 
   \def\eqref#1{({\ref{#1}})}                
  \def\startnewsection#1#2{\section{#1}\label{#2}\setcounter{equation}{0}}   
  \def\nnewpage{} 
\newcommand{\Unormlong}{\|U\|_{L^2 ({\mathbb{R}^n})}}
\newcommand{\Unorm}{\|U\|}
\newcommand{\Lnorm}{L^2 (\mathbb{R}^n)}
\DeclarePairedDelimiterX{\norm}[1]{\lVert}{\rVert}{#1}
\begin{document}
\def\ques{{\colr \underline{??????}\colb}}
\def\nto#1{{\colC \footnote{\em \colC #1}}}
\def\fractext#1#2{{#1}/{#2}}
\def\fracsm#1#2{{\textstyle{\frac{#1}{#2}}}}   
\def\nnonumber{}


\def\colr{{}}
\def\colg{{}}
\def\colb{{}}
\def\cole{{}}
\def\colA{{}}
\def\colB{{}}
\def\colC{{}}
\def\colD{{}}
\def\colE{{}}
\def\colF{{}}

\ifnum\coloryes=1

  \definecolor{coloraaaa}{rgb}{0.1,0.2,0.8}
  \definecolor{colorbbbb}{rgb}{0.1,0.7,0.1}
  \definecolor{colorcccc}{rgb}{0.8,0.3,0.9}
  \definecolor{colordddd}{rgb}{0.0,.5,0.0}
  \definecolor{coloreeee}{rgb}{0.8,0.3,0.9}
  \definecolor{colorffff}{rgb}{0.8,0.3,0.9}
  \definecolor{colorgggg}{rgb}{0.5,0.0,0.4}

 \def\colg{\color{colordddd}}
 \def\colb{\color{black}}
 \def\colr{\color{red}}
 \def\cole{\color{colorgggg}}

 \def\colA{\color{coloraaaa}}
 \def\colB{\color{colorbbbb}}
 \def\colC{\color{colorcccc}}
 \def\colD{\color{colordddd}}
 \def\colE{\color{coloreeee}}
 \def\colF{\color{colorffff}}
 \def\colG{\color{colorgggg}}

\fi
\ifnum\isitdraft=1
   \chardef\coloryes=1 
   \baselineskip=17pt
   \input macros.tex
   \def\blackdot{{\color{red}{\hskip-.0truecm\rule[-1mm]{4mm}{4mm}\hskip.2truecm}}\hskip-.3truecm}
   \def\bdot{{\colC {\hskip-.0truecm\rule[-1mm]{4mm}{4mm}\hskip.2truecm}}\hskip-.3truecm}
   \def\purpledot{{\colA{\rule[0mm]{4mm}{4mm}}\colb}}
   \def\pdot{\purpledot}
\else
   \baselineskip=15pt
   \def\blackdot{{\color{red}{\hskip-.0truecm\rule[-1mm]{4mm}{4mm}\hskip.2truecm}}\hskip-.3truecm}
   \def\pdot{{\color{blue}{\hskip-.0truecm\rule[-1mm]{4mm}{4mm}\hskip.2truecm}}\hskip-.3truecm}
\fi

\definecolor{labelkey}{gray}{.1}
\definecolor{refkey}{gray}{.1}

\def\tdot{\fbox{\fbox{\bf\tiny I'm here; \today \ \currenttime}}}
\def\nts#1{{\hbox{\bf ~#1~}}} 
\def\nts#1{{\colr\hbox{\bf ~#1~}}} 
\def\ntsf#1{\footnote{\hbox{\bf ~#1~}}} 
\def\ntsf#1{\footnote{\colr\hbox{\bf ~#1~}}} 
\def\bigline#1{~\\\hskip2truecm~~~~{#1}{#1}{#1}{#1}{#1}{#1}{#1}{#1}{#1}{#1}{#1}{#1}{#1}{#1}{#1}{#1}{#1}{#1}{#1}{#1}{#1}\\}
\def\biglineb{\bigline{$\downarrow\,$ $\downarrow\,$}}
\def\biglinem{\bigline{---}}
\def\biglinee{\bigline{$\uparrow\,$ $\uparrow\,$}}

\def\tilde{\widetilde}

\newtheorem{Theorem}{Theorem}[section]
\newtheorem{Corollary}[Theorem]{Corollary}
\newtheorem{Proposition}[Theorem]{Proposition}
\newtheorem{Lemma}[Theorem]{Lemma}
\newtheorem{Remark}[Theorem]{Remark}
\newtheorem{definition}{Definition}[section]
\def\theequation{\thesection.\arabic{equation}}
\def\endproof{\hfill$\Box$\\}
\def\square{\hfill$\Box$\\}
\def\comma{ {\rm ,\qquad{}} }            
\def\commaone{ {\rm ,\qquad{}} }         
\def\dist{\mathop{\rm dist}\nolimits}    
\def\sp{\mathop{\rm sp}\nolimits}    
\def\sgn{\mathop{\rm sgn\,}\nolimits}    
\def\Tr{\mathop{\rm Tr}\nolimits}    
\def\div{\mathop{\rm div}\nolimits}    
\def\supp{\mathop{\rm supp}\nolimits}    
\def\divtwo{\mathop{{\rm div}_2\,}\nolimits}    
\def\re{\mathop{\rm {\mathbb R}e}\nolimits}    

\def\indeq{\qquad{}}                     
\def\period{.}                           
\def\semicolon{\,;}                      

\title{Quantitative unique continuation for a parabolic equation}
\author{Guher Camliyurt and Igor Kukavica}
\maketitle

\date{}


\medskip

\indent Department of Mathematics, University of Southern California, Los Angeles, CA 90089\\
\indent e-mails: camliyur@usc.edu, kukavica\char'100usc.edu

\begin{abstract}
We address the quantitative uniqueness properties of the 
solutions of the parabolic equation
$
   \partial_t u - \Delta u 
   = w_j (x,t) \partial_j u 
     + v(x,t) u 
$ where $v$ and $w$ are bounded. 
We prove that for solutions $u$, the order of vanishing is bounded
by 
$C(\Vert v\Vert_{L^\infty}^{2/3}+\Vert w\Vert_{L^\infty}^2)$
matching the upper bound previously established in the elliptic case.
\end{abstract}



\startnewsection{Introduction}{sec1}  
The main purpose of this paper is to
provide quantitative uniqueness estimates
for a parabolic equation
  \begin{align}
   \partial_t u - \Delta u 
   = w_j (x,t) \partial_j u 
     + v(x,t) u 
   \label{EQ50}
  \end{align}
by 
finding an upper bound for the order of vanishing
of a solution in terms of the size of coefficients
$v$ and $w$.

Starting with the works of Carleman
\cite{C}, there have been many results showing that
solutions of the equations of parabolic or elliptic type have 
a finite order of vanishing, in which case we say that the equation
has the unique continuation property.
For instance, a result
of Escauriaza and Vega \cite{EV}
shows that for $v$ in a sharp space $L_t^{\infty}L_x^{n/2}$
(with the natural smallness assumption for the corresponding norm)
 and $w=0$, the equation \eqref{EQ50} has the unique continuation property.
Also, 
Koch and Tataru proved in \cite{KT}  that the equation 
\eqref{EQ11} has this property
provided $v$ and $w$ belong to appropriate Lebesgue spaces.
A sharp result of this type for the elliptic counterpart
was established by Jerison and Kenig 
\cite{JK} (cf.~\cite{K1,K2,K3,V} for reviews on unique continuation for
elliptic, parabolic, and dispersive equations).

More recently, considerable efforts were dedicated to
the quantitative estimates of unique continuation,
i.e., to estimating the maximal degree to which 
the solution may vanish at a point.
There are many applications of such results; in particular, 
quantitative uniqueness results
yield lower bounds on solutions of the corresponding PDE.
Also, estimates on the order of vanishing
are an essential tool for obtaining upper bounds
on the size of level sets of PDE \cite{DF1,DF2,DF3,Ku2,Ku3,Ku4,L},
in inverse problems and control \cite{AE},
upper bounds on vortex degrees \cite{Ku2},
spectral information on Schrodinger operators \cite{BK},
backward uniqueness \cite{EF},
Hadamard type theorems \cite{AMRV,B1,B2,BC,CRV,EFV,EVe,LNW},
and other topics.

The research on quantitative uniqueness was initiated by
Donnelly and Fefferman \cite{DF1,DF2,DF3},
who proved that the 
maximal order of vanishing ${\mathcal O}_{u}$
of an eigenfunction $u$ of an elliptic operator
is bounded by $C \sqrt \lambda$, where $\lambda$
is the corresponding eigenvalue. 
They used this estimate to prove that the nodal volume of a zero
set of an analytic eigenfunction is bounded by
$C\sqrt \lambda$.
It is
natural to ask the following question: Is the order of vanishing
of a solution of a boundary value problem
$-\Delta u=w \nabla u + v u$ subject to periodic, Dirichlet, or
Neumann conditions (or with prescribed constant doubling property)
bounded by
  \begin{equation}
    C\Vert v\Vert_{L^\infty}^{1/2}
      +  C \Vert w\Vert_{L^\infty}
    \label{EQ51}
  \end{equation}
\cite{Ku1}.
This question remains open and a precise upper bound
is currently unknown.
Based on an example by Meshkov
\cite{M}, who
provided a complex valued solution decaying with a certain
exponential rate at infinity, Bourgain and Kenig predicted that
(when  $w=0$), the optimal vanishing
rate is more likely
$C \Vert v\Vert_{L^\infty}^{2/3}$. In addition, they
proved that the order of vanishing can  be estimated
by this quantity 
for any solution of the elliptic equation
$\partial_{i}(a_{ij}\partial_{j}u)+ v u=0$
(cf.~also \cite{KSW} for stronger results in the plane).

In this paper, we address  this question for the parabolic
equation \eqref{EQ50}. 
As it is the case for the elliptic equation, it is an open
problem whether, under natural growth conditions at
infinity (for example as those in the present paper), 
the quantity \eqref{EQ51} provides
an upper bound for the order of vanishing for solutions $u$ 
of \eqref{EQ50}. Our main result asserts that the order of vanishing
bounded by
  \begin{equation}
   C \Vert w\Vert_{L^\infty}^{2}
   + C \Vert v\Vert_{L^{\infty}}^{2/3}
   \period
   \label{EQ56}
  \end{equation}
This result is in agreement with an elliptic upper bound proved
by Bourgain and Kenig in \cite{BK} when $w=0$. However, our proof is based
on a completely completely different approach, which we describe next.

The method presented here
is based on the parabolic frequency function introduced by
Kurata \cite{Kur} and Poon \cite{P}, 
which is in turn
inspired by
earlier works on elliptic equations
by Almgren \cite{Al} and Garofalo and Lin \cite{GL}.
It is based on the 
observation that
the frequency
  \begin{equation}
  \phi(t)
  =
    \int u^2(x,t) G_0(x,t)\,dx
   \label{EQ52}
  \end{equation}
is logarithmically convex for solutions of the heat equation.
Above,
  \begin{equation}
   G_{0}(x,t)
   =
   \frac{1}{t^{n/2}}
    e^{{|x|^2/4 t}}
   \comma x\in{\mathbb R}^{n}
   \commaone t>0
   \label{EQ10}
  \end{equation}
denotes the $(2\pi)^{n/2}$-multiple of a backward Gaussian kernel.
It was also shown in \cite{Kur,P}
that the method can be applied to the equation
\eqref{EQ50} yielding a strong unique continuation
property for $u$ when $v$ and $w$ are bounded.
Following the dependence on $\Vert v\Vert_{L^\infty}$ and
$\Vert w\Vert_{L^\infty}$, we obtain that
the degree of vanishing 
(for example under an assumption of periodicity) 
is bounded by
  \begin{equation}
   C \Vert v\Vert_{L^\infty}^2
    +C 
   e^{C \Vert w\Vert_{L^\infty}}
   \period   
   \label{EQ54}
  \end{equation}
In order to improve this result
and obtain the bound \eqref{EQ56},
we use a similarity variable
approach
from \cite{Ku2} (cf.~also \cite{An,Ch}) with the addition 
of a change
of variable
introduced in \cite{Ku4}
which optimizes the Dirichlet quotient
(i.e., replaces $Q$ in \eqref{EQ53}
with  $\bar Q$ in \eqref{EQ18}).
The similarity change of variable
was used in many contexts; here the idea is that
the parabolic structure and the Dirichlet quotient
method (\cite{A,AN,CFNT,FS}) leads to 
the necessary logarithmic convexity.
In addition, we use in an essential way theorems
due to 
Alessandrini and Vessella \cite{AV}
on the polynomial approximation
of a solution of a parabolic equation
(cf.~also \cite{H1,H2}).

The paper is structured as follows.
In Section~\ref{sec2}, we state the main result on the maximal
order of vanishing for the equation \eqref{EQ50}.
In Section~\ref{sec3} we find a point $x_\epsilon$
and the time $-\epsilon$, where 
in a certain sense the frequency is optimized.
In Section~\ref{sec4}, we recall the similarity change of variables
and   spectral properties of the resulting linear part.
Finally, Section~\ref{sec5} contains the proof of the main theorem.

\startnewsection{Notation and the main result}{sec2}
Our goal is to address the quantitative uniqueness for
solutions $u$ of the equation
  \begin{align}
   \partial_t u - \Delta u = w_j (x,t) \partial_j u + v(x,t) u \label{EQ11}
  \end{align}
defined for 
$(x,t)\in \mathbb{R}^n \times I$,
where $I$ is an open interval containing
$[T_0, T_0 +T]$,
with
$T_0 \in \mathbb{R}$ and $T>0$. 
The solution $u$
as well as the coefficients $v,w\in L^{\infty}({\mathbb R}^{n})$ 
are assumed to be
periodic in $x$ with respect to $\Omega = [-\pi, \pi]^n$
(for the conditions omitting the periodicity assumption,
cf.~Remark~\ref{R01} and Section~\ref{sec6} below).
Also, assume  that
  \begin{align}
   |w_j (x,t)| \leq M_1   
   \comma (x,t) \in \mathbb{R}^n \times I
   \commaone j = 1, \ldots, n
  \label{EQ12}
  \end{align}
 and 
  \begin{align}
   |v(x,t)| \leq M_0
   \comma (x,t) \in \mathbb{R}^n \times I
   \period
  \label{EQ13}
  \end{align}
Since we are interested in the dependence 
of the order of vanishing
on 
$M_0$ and $M_1$ 
when they are large, we assume $M_0, M_1 \geq 1$.
For any $(x_0, t_0) \in \mathbb{R}^n \times \mathbb{R}$ and $r>0$,
denote by 
  \begin{align*}
   Q_r (x_0 , t_0) 
     = \bigl\{
            (x,t) \in \mathbb{R}^n \times \mathbb{R}: |x- x_0|<r, -r^2 < t-t_0< 0 
       \bigr\},
  \end{align*}
the parabolic cylinder centered at $(x_0,t_0)$ with radius $r>0$,
while the parabolic norm of $(x,t) \in \mathbb{R}^n \times \mathbb{R} $  is given by
  \begin{align*}
   |(x,t)| = (|x|^2 + |t|)^{1/2} 
   \period
  \end{align*}
We write $W^{2,1}_{\infty} (Q_1)$
for the Sobolev space of functions whose $x$-derivatives up to second order and $t$-derivative up to the first order belong to $L^{\infty} (Q_1)$.
 
Denote by $O_{(x_0, t_0)} (u)$ the order of vanishing of $u$ at $(x_0,
t_0)$,
which we define (in the $L^2$ sense) as the
smallest integer $d$ such that
  \begin{equation}
   \Vert u\Vert_{L^2(Q_r(x_0,t_0))}
    =
   {\mathcal O}(r^{d+(n+2)/2}),
    \mbox{~~as~$r\to 0^+$}
   \period
   \label{EQ57}
  \end{equation}
Let
$q_0$ be an upper bound for the Dirichlet quotients of $u$ on
$[T_0,T_0+T]$, i.e.,
  \begin{equation}
   \frac{
    \Vert \nabla u(\cdot,t)\Vert_{L^2}^2
       }{
    \Vert u(\cdot,t)\Vert_{L^{2}}^2  
   }
   \le
   q_0
   \comma t\in [T_0,T_0+T]
   \period
   \label{EQ09}
  \end{equation}
The following is the main theorem of this paper.

\cole
\begin{Theorem}\label{thm1}
Let $u \in W^{2,1}_{\infty} (\Omega \times I)$ be a 
nontrivial solution of \eqref{EQ11} for $t$
in a neighborhood of $[T_0,T_0 +T]$
with $w_j$ and $v$ satisfying \eqref{EQ12} and~\eqref{EQ13},
respectively. Then the order of vanishing of $u$ at $(x_0, t_0)$
satisfies
  \begin{align}
   O_{(x_0,t_0)} (u) \leq C ( M_1 ^2 + M_0 ^{2/3} ) \label{vo11}
  \end{align}
for every $(x_0,t_0)\in \Omega\times[T_0+T/2,T_0+T]$
where the constant $C$ depends on $T$, $L$, and $q_0$.
\end{Theorem}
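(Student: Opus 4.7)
The plan is to prove Theorem~\ref{thm1} by combining a frequency-function argument in parabolic similarity variables with the polynomial approximation theorem of Alessandrini and Vessella~\cite{AV}. After translating, we may assume $(x_0, t_0) = (0, 0)$ and work on $\Omega \times [-T, 0)$. Following the outline of Sections~\ref{sec3}--\ref{sec5}, the argument has three stages: first, identify a scale $\epsilon \in (0, T/2]$ and a shift $x_\epsilon \in \Omega$ at which the Dirichlet quotient is close to optimized; second, recenter and rescale to similarity variables around $(x_\epsilon, -\epsilon)$ so that \eqref{EQ11} becomes a perturbation of the Ornstein-Uhlenbeck evolution; and third, use logarithmic convexity of the modified frequency $\bar Q$ together with Alessandrini-Vessella approximation to bound the order of vanishing.

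For the change of variables one sets $y = (x - x_\epsilon)/\sqrt{\epsilon - t}$ and $s = -\log((\epsilon - t)/T)$, so that $U(y, s) := u(x, t)$ solves $\partial_s U = \Delta_y U - \tfrac{1}{2} y \cdot \nabla_y U + \tilde{w}(y, s) \cdot \nabla_y U + \tilde{v}(y, s) U$, where the rescaled coefficients obey $\|\tilde{v}\|_\infty \leq C \epsilon M_0$ and $\|\tilde{w}\|_\infty \leq C \sqrt{\epsilon}\, M_1$. The unperturbed drift operator has Hermite polynomial eigenfunctions with integer-spaced spectrum on the Gaussian-weighted space, and this spectral gap underlies both the logarithmic convexity of $\phi$ in \eqref{EQ52} and the monotonicity of $\bar Q$ in \eqref{EQ18}. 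The role of the shift $x_\epsilon$ selected in Section~\ref{sec3} is to suppress the linear Hermite mode so that $\bar Q$ acquires the stronger spectral gap needed to absorb the $v$-perturbation at the sharp rate.

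Differentiating $\bar Q$ along the perturbed evolution and using the spectral gap yields a differential inequality of the form $\bar Q'(s) \geq -C(\|\tilde{v}\|_\infty + \|\tilde{w}\|_\infty^2)$; integrating over a bounded $s$-window and invoking the logarithmic convexity then produces a bound $\bar Q \leq C(q_0 + \epsilon M_0 + \epsilon M_1^2)$. Choosing $\epsilon$ of order $\min\{M_0^{-2/3}, M_1^{-2}, T/2\}$ makes the right-hand side at most $C(q_0 + M_0^{2/3} + M_1^2)$, and the Alessandrini-Vessella theorem \cite{AV} then converts this frequency bound at scale $\sqrt{\epsilon}$ into an approximation of $u$ by a polynomial of matching degree on $Q_{\sqrt{\epsilon}}(0, 0)$, which via \eqref{EQ57} yields \eqref{vo11}.

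The main obstacle is the sharp exponent $2/3$ on $M_0$. A straightforward Kurata-Poon frequency estimate for \eqref{EQ11} only produces the weaker bound \eqref{EQ54}, since it estimates $v$ pointwise without exploiting any polynomial approximation. The $2/3$-gain requires the simultaneous use of (i) the optimization of the center $x_\epsilon$ in Section~\ref{sec3}, which supplies the improved spectral gap for $\bar Q$, and (ii) the Alessandrini-Vessella polynomial approximation, which trades the $v$-perturbation against the degree of the approximating polynomial at the correct rate. Balancing these scales against the $\|w\|_\infty^2$ contribution and tracking the interplay between $\epsilon$, the frequency, and the approximation degree is where the technical bookkeeping of Section~\ref{sec5} is concentrated.
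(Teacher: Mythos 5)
Your outline reproduces the general flavor of the argument (optimize a center $x_\epsilon$, pass to similarity variables, bound a modified frequency, invoke Alessandrini--Vessella), but two of your steps would not work as stated. First, you center the similarity variables at $(x_\epsilon,-\epsilon)$ (indeed your substitution $s=-\log((\epsilon-t)/T)$ blows up at $t=\epsilon$, not even at the center you chose), whereas the quantity you must control is the vanishing order at $(0,0)$. The paper keeps the self-similar blow-up centered at the vanishing point $(0,0)$, with $\tau=-\log(-t)\to\infty$ as $t\to 0^-$, and incorporates $x_\epsilon$ through a Galilean change of variables $u(x,t)=\tilde u(x-x_\epsilon t/\epsilon,t)$, which produces the constant drift $a=-x_\epsilon/\epsilon$ in \eqref{EQ2.1.1a}; the whole point of replacing $Q$ in \eqref{EQ53} by $\bar Q$ in \eqref{EQ18} is to absorb this large drift, not to ``suppress the linear Hermite mode'' or improve the spectral gap. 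The role of $x_\epsilon$ (Lemma~\ref{L01}) is solely to guarantee that the \emph{initial} frequency satisfies $\bar Q(\tau_0)\le 2\epsilon q_0+Cq_0^{1/2}$, i.e.\ a bound depending only on $q_0$; with your centering, the large-$s$ asymptotics of your frequency say nothing about the behavior of $u$ near $(0,0)$, so the conclusion \eqref{vo11} does not follow from your stages as arranged.

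Second, the bridge from the frequency bound to the order of vanishing is missing. Alessandrini--Vessella does not ``convert a frequency bound at scale $\sqrt\epsilon$ into \eqref{EQ57}''; it approximates $u$ by a homogeneous caloric polynomial of degree equal to the (a priori unknown) vanishing order $d$. The paper needs the full chain: the differential inequality for $\bar Q$ together with \eqref{EQ41} and \eqref{EQ29} forces $\bar Q(\tau)\to m/2$ with $m\in\mathbb{N}_0$ and $m\le C(M_1^2+M_0^{2/3})$; integrating \eqref{EQ30} gives the two-sided Gaussian bound \eqref{EQ42}; and Lemma~\ref{L06} (using the moment computations of Lemmas~\ref{L04}--\ref{L05} and \cite{AV}) identifies $d=m$. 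None of this appears in your proposal. Relatedly, your differential inequality $\bar Q'(s)\ge -C(\|\tilde v\|_\infty+\|\tilde w\|_\infty^2)$ has the wrong direction for an upper bound on $\bar Q$, and your claimed bound $\bar Q\le C(q_0+\epsilon M_0+\epsilon M_1^2)$ is too strong: the zeroth-order term must be paired with $\|(A(\tau)-\bar Q I)U\|$ and then squared by Young's inequality, producing $e^{-2\tau}M_0^2$ as in \eqref{EQ25}; it is precisely $\epsilon^2M_0^2\le M_0^{2/3}$ under the choice $\epsilon\sim(M_1^2+M_0^{2/3})^{-1}$ that yields the exponent $2/3$, not any interplay between the choice of $x_\epsilon$ and the polynomial approximation.
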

\colb

\begin{Remark}
\label{R01}
{\rm
It is not difficult to check that it is  possible to  replace
the $x$-periodicity assumption with
  \begin{equation}
   \int_{{\mathbb R}^{n}} u(x,t)^2\,dx
   \le
   M
   \int_{B_1} 
     u(x,t)^2
   \,dx
   \comma 
   t\in[T_0,T_0+T]
   \label{EQ55}
  \end{equation}
where $M$ is a constant,
making an assertion about the doubling at the point $(0,T_0+T)$.
For the necessary modifications, cf.~Section~\ref{sec6}
below.
}
\end{Remark}

The proof of Theorem~\ref{thm1} is divided into several lemmas. Let 
$(x_0,t_0) \in \Omega \times [T_0 + T/2,T_0 +T]$. 
By translation and rescaling, 
we may assume that $u$ is defined
for $t$ in an open interval $I$ of
$[-T,0]$ and that
  \begin{align*}
   (x_0, t_0) = (0,0)
   \period
  \end{align*}

\startnewsection{Optimizing the frequency function}{sec3}
Recall from \eqref{EQ10} that $G_0$ represents the
$(4\pi)^{n/2}$-multiple of the backward Gaussian kernel.

\cole
\begin{Lemma}\label{L01}
Let $u$ be as above
with
$v$ and $w_j$ satisfying  \eqref{EQ12} and~\eqref{EQ13}. 
Assume that $\epsilon\in(0,T)$.
Then
  \begin{align}
   \frac{\epsilon \int_{\mathbb{R}^n} |\nabla u(x_{\epsilon} +y, -\epsilon)|^2 G_0 (y, -\epsilon) \,dy }{\int_{\mathbb{R}^n}u(x_{\epsilon} +y, -\epsilon)^2 G_0 (y, -\epsilon) \,dy} \leq  \epsilon q(-\epsilon)
    \label{vo12}
  \end{align}
for some $x_{\epsilon} \in \Omega$.
\end{Lemma}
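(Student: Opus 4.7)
The plan is to obtain $x_\epsilon$ via an averaging argument over the period cell $\Omega$, taking advantage of the $\Omega$-periodicity of $u(\cdot,t)$. Set
$$N(x) = \int_{\mathbb{R}^n} |\nabla u(x+y,-\epsilon)|^2 G_0(y,-\epsilon)\,dy, \qquad D(x) = \int_{\mathbb{R}^n} u(x+y,-\epsilon)^2 G_0(y,-\epsilon)\,dy,$$
and write $q(t) = \|\nabla u(\cdot,t)\|_{L^2(\Omega)}^2 / \|u(\cdot,t)\|_{L^2(\Omega)}^2$ for the Dirichlet quotient on one period, which is well-defined by \eqref{EQ09}. It suffices to produce $x_\epsilon \in \Omega$ with $D(x_\epsilon) > 0$ and $N(x_\epsilon) \leq q(-\epsilon) D(x_\epsilon)$; multiplying by $\epsilon$ and dividing then yields \eqref{vo12}.

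The key computation is the following. Integrating $N(x)$ over $x \in \Omega$ and swapping the order (Fubini applies thanks to Gaussian decay of $G_0$ together with the $L^\infty$ bound on $\nabla u$ coming from $u \in W^{2,1}_\infty$), the $\Omega$-periodicity of $|\nabla u(\cdot,-\epsilon)|^2$ lets me translate the inner integral, giving
$$\int_\Omega N(x)\,dx = \int_{\mathbb{R}^n} G_0(y,-\epsilon)\Bigl(\int_\Omega |\nabla u(x+y,-\epsilon)|^2\,dx\Bigr)dy = c_\epsilon \|\nabla u(\cdot,-\epsilon)\|_{L^2(\Omega)}^2,$$
where $c_\epsilon = \int_{\mathbb{R}^n} G_0(y,-\epsilon)\,dy > 0$. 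The identical calculation applied to $u^2$ yields $\int_\Omega D(x)\,dx = c_\epsilon \|u(\cdot,-\epsilon)\|_{L^2(\Omega)}^2$, so dividing produces the identity $\int_\Omega (N(x) - q(-\epsilon)\, D(x))\,dx = 0$.

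A function with zero mean on $\Omega$ cannot be strictly positive throughout $\Omega$, so there exists $x_\epsilon \in \Omega$ at which $N(x_\epsilon) \leq q(-\epsilon) D(x_\epsilon)$. The required nondegeneracy $D(x_\epsilon) > 0$ is in fact automatic at every point of $\mathbb{R}^n$: since $G_0 > 0$, vanishing of $D$ at a single point would force $u(\cdot,-\epsilon) \equiv 0$ by periodicity, contradicting the finiteness of $q(-\epsilon)$ guaranteed by \eqref{EQ09}. There is no real obstacle in this lemma: the statement is essentially a one-line averaging argument made possible by periodicity, with only the Fubini swap and the nondegeneracy of $D$ requiring a brief justification.
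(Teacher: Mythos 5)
Your proof is correct and follows essentially the same route as the paper: integrate the Gaussian averages over the period cell $\Omega$, use periodicity to reduce them to the Dirichlet quotient $q(-\epsilon)$, and conclude by a mean-value/pigeonhole argument. The paper phrases this as a contradiction (assuming the pointwise inequality \eqref{EQ37} holds everywhere and unfolding the integral over lattice translates of $\Omega$), while you argue directly via the zero-mean function $N-q(-\epsilon)D$, but the key computation and the role of periodicity are identical.
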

\colb

This lemma shall be used below with
$
   \epsilon = 1/ C(M_1 ^2 + M_0 ^{2/3})
$ 
where $C$ is sufficiently large.

\begin{proof}[Proof of Lemma~\ref{L01}]
Assume that, for some $\lambda >0$, we have 
  \begin{align}
  \lambda \int_{\mathbb{R}^n} u(x+y, -\epsilon)^2 G_0 (y, -\epsilon) \,dy 
     \leq 
    \int_{\mathbb{R}^n} |\nabla u (x+y, -\epsilon)|^2 G_0 (y, -\epsilon)\,dy 
   \comma x\in{\mathbb R}^{n}
   \period
   \label{EQ37}
  \end{align}
We intend to show that this is not possible if
$\lambda>q(-\epsilon)$
(this is sufficient since due to periodicity and continuity,
the minimum of the quotient in \eqref{vo12}
is achieved).
The statement \eqref{EQ37} is equivalent to
  \begin{align}
   \int_{\mathbb{R}^n} u(y, -\epsilon)^2 G_0 (y-x, -\epsilon) \,dy 
   \leq 
   \frac{1}{\lambda}
   \int_{\mathbb{R}^n} |\nabla u(y, -\epsilon)|^2 G_0 (y-x, -\epsilon) 
    \,dy
   \comma x\in{\mathbb R}^{n}
   \period
   \label{EQ38}
  \end{align} 
Integrating the left side of \eqref{EQ38} over $\Omega$ 
and using $\int_{\mathbb{R}^n} G_0 (y-x, -\epsilon)\,dx = (4 \pi)^{n/2}$, 
we obtain
  \begin{align}
   & \int_{\Omega} \int_{\mathbb{R}^n} u(y, -\epsilon)^2 G_0 (y-x, -\epsilon) \,dy \,dx 
    = \int_{\Omega} \sum_{j \in \mathbb{Z}^n} \int_{j+\Omega} u(y, -\epsilon)^2 G_0 (y-x, -\epsilon) \,dy \,dx 
   \nonumber\\&\indeq
     = \int_{\Omega} \sum_{j \in \mathbb{Z}^n} \int_{\Omega} u(y', -\epsilon)^2 G_0 (y' +j-x, -\epsilon)\,dy' \,dx
   \nonumber\\&\indeq
      = \int_{\Omega}
          u(y', -\epsilon)^2 \left( \sum_{j \in \mathbb{Z}^n} \int_{\Omega} G_0 (y' +j -x, -\epsilon)\,dx\right) \,dy' 
    \nonumber\\&\indeq
 = \int_{\Omega} u(y',-\epsilon)^2 \int_{\mathbb{R}^n} G_0 (y'-x , -\epsilon)\,dx \,dy' 
\nonumber\\&\indeq
    = (4 \pi)^{n/2} \int_{\Omega} u(y', -\epsilon)^2 \, dy'
   \period
   \label{EQ40}
  \end{align}
Similarly, we have
  \begin{equation}
      \int_{\Omega} \int_{\mathbb{R}^n} |\nabla u(y, -\epsilon)|^2 G_0 (y-x, -\epsilon) \,dy \,dx 
     =
     (4 \pi)^{n/2} \int_{\Omega} |\nabla u(y', -\epsilon)|^2 \,dy'
   \period
   \label{EQ39}
  \end{equation}
Combining \eqref{EQ40} and \eqref{EQ39} with \eqref{EQ38}, we get
  \begin{align}
   \lambda \int_{\Omega} u(y, -\epsilon)^2 \,dy \leq \int_{\Omega} |\nabla u(y, -\epsilon)|^2 \, dy = q(-\epsilon) \int_{\Omega} u(y, -\epsilon)^2 \,dy
   \period
  \end{align}
Since $u(\cdot, -\epsilon)$ is not identically zero,  we obtain a contradiction 
if $\lambda >q(-\epsilon)$. Setting
$\lambda = 2q(-\epsilon)$, we conclude that there exists $x_{\epsilon} \in \mathbb{R}^n$ such that \eqref{vo12}  holds. By periodicity of $u$, we may assume that $x_{\epsilon} \in \Omega$.
\end{proof}


We proceed with a change of variables
so that at time $-\epsilon$ the solutions starts at the point 
$x_{\epsilon}$ from Lemma~\ref{L01}.
Let 
  \begin{align}
   u(x,t) = \tilde{u} \left(x - \frac{x_{\epsilon}}{\epsilon}t,t\right)
   \period
  \end{align}
We then have 
  \begin{align}
     \tilde{u} (x,-\epsilon) = u(x+ x_{\epsilon}, -\epsilon)
  \end{align}
and
  \begin{align}
     \tilde{u} (x,0) = u(x,0)
   \period
  \end{align}
Note also that 
  \begin{align}
   O_{(0,0)}(\tilde{u}) = O_{(0,0)} (u) 
   \period
   \label{vo13}
  \end{align}
Furthermore,
  \begin{align}
   \frac{\int_{\Omega} |\nabla \tilde{u}(x,t)|^2 \,dx}{\int_{\Omega} \tilde{u}(x,t)^2 \,dx} = \frac{\int_{\Omega} |\nabla u(x,t)|^2 \,dx}{\int_{\Omega} u(x,t)^2 \,dx} = q(t),
  \end{align}
for all $t \in [-\epsilon, 0)$, and~\eqref{vo12} may be written as
  \begin{align}
  \frac{\epsilon \int_{\mathbb{R}^n} |\nabla \tilde{u} (y, -\epsilon)|^2 G_0 (y, -\epsilon)\,dy}{\int_{\mathbb{R}^n} \tilde{u}(y, -\epsilon)^2 G_0 (y, -\epsilon)\,dy} \leq  \epsilon q(-\epsilon)
   \period   
 \label{vo14}
  \end{align}
The function $\tilde{u}$ solves a modified equation
  \begin{align}
   \partial_t \tilde{u} - \Delta \tilde{u} 
        = -\frac{1}{\epsilon} x_{\epsilon} \cdot \nabla \tilde{u} 
         + w \cdot \nabla \tilde{u} + v \tilde{u}
   \period
   \label{EQ2.1.1}
  \end{align}
Observing \eqref{vo13}, we  remove tilde from here on, and write
$u$ instead of $\tilde u$.
Setting 
  \begin{align*}
   a = -\frac{x_{\epsilon}}{\epsilon},
  \end{align*}
\eqref{EQ2.1.1} may then be rewritten as 
  \begin{align}
   \partial_t u - \Delta u = a_j \partial_j u + w_j \partial_j u + v u
   \period
  \label{EQ2.1.1a}
  \end{align}

\startnewsection{Similarity variables}{sec4}

Next, we apply a similarity change of variable
  \begin{align}
   u(x,t) = e^{|x|^2 / 8(-t)} U \left(\frac{x}{\sqrt{-t}}, -\log(-t)\right)
  \end{align}
i.e.,
  \begin{align}
   U(y,\tau) = e^{-|y|^2 /8} u(y e^{- \tau/2}, - e^{-\tau})
  \end{align}
with $\tau = -\log(-t)$, i.e., $t= -e^{-\tau} $
for $t \in [-\epsilon, 0]$. Also, write
  \begin{align}
   V(y,\tau)= v(y e^{-\tau /2}, -e^{-\tau})
   \comma 
      (y, \tau) \in \mathbb{R}^n \times [\tau_0 , \infty) 
  \end{align}
and
  \begin{align}
   W_j (y, \tau) = w_j (y e^{- \tau /2} , -e^{- \tau}) 
   \comma (y,\tau) \in \mathbb{R}^n \times [\tau_0 , \infty),
  \end{align}
for $j=1, \ldots,n$ where $\tau_0 =  \log(1/ \epsilon)$. 
Denoting
  \begin{align}
   HU = - \Delta U + \left( \frac{|y|^2}{16} - \frac{n}{4}\right)U
  \end{align}
and
  \begin{align}
   Q (\tau) = \frac{(HU, U)_{L^2 ({\mathbb{R}^n})}}{\|U\|^2_{L^2 ({\mathbb{R}^n})}},
   \label{EQ53}
  \end{align}
the equation \eqref{EQ2.1.1a} may be written as
  \begin{align}
   \partial_{\tau} U + HU & = e^{-\tau /2} (a_j y_j U + a_j \partial_j U) + e^{- \tau /2} (y_j W_j U + W_j \partial_{\tau} U) 
   + e^{-\tau} V U \label{EQ14}  
  \end{align}
for $\tau \geq \tau_0$, while at $\tau = \tau_0$,
  \begin{align}
   U(y,\tau_0) & = e^{-|y|^2 / 8} u \left(\frac{y}{\sqrt{\epsilon}}, -\epsilon\right)
   \period
 \label{EQ15}
  \end{align}
Then
  \begin{align}
   & \partial_{\tau} U + \left( A(\tau) - \bar{Q} (\tau) I \right) U + \bar{Q} (\tau) U \nonumber \\ & \indeq =  e^{-\tau /2} a_j \partial_j U + e^{- \tau /2}( y_j W_j U + W_j \partial_j U ) + e^{-\tau} V U \label{EQ16}
  \end{align}
where we denote
  \begin{align}
   A(\tau)U = HU - e^{-\tau /2} a_j y_j U \label{EQ17}
  \end{align}
and 
  \begin{align}
   \bar{Q}(\tau) & = \frac{(A(\tau)U,U)_{L^2(\mathbb{R}^n)}}{\|U\|^2_{L^2 ({\mathbb{R}^n})}} 
   = Q (\tau) - \frac{e^{- \tau /2} a_j}{\|U\|^2_{L^2 (\mathbb{R}^n)}} \int_{\mathbb{R}^n} y_j U^2 \,dy
   \period
  \label{EQ18}
  \end{align}
A straight-forward change of variables yields 
  \begin{align}
   \norm{U(\cdot,\tau)}^2_{\Lnorm} = \int_{\mathbb{R}^n} u(x,t)^2 G_0 (x,t) \,dx \label{EQ18'}
  \end{align}
and 
  \begin{align}
   (HU(\cdot,\tau), U(\cdot,\tau))_{\Lnorm} = |t| \int_{\mathbb{R}^n} |\nabla u(x,t)|^2 G_0 (x,t) \,dx 
   \period
\label{EQ18''}
  \end{align}
We now observe a simple fact
  \begin{align}
   \norm{(H - \bar{Q}(\tau)I)v}_{\Lnorm} \geq \dist (\bar{Q}(\tau), \sp(H))
   \comma 
   \Vert v\Vert_{L^2({\mathbb R}^{n})}=1
   \period
   \label{EQ41}
  \end{align}

\noindent In order to analyze the asymptotic behavior of eigenvalues of $A(\tau)$, 
we first recall the spectral properties of $H$. 
The Hermite polynomials $h_k$ are defined on the real line as
  \begin{align*}
   h_k (x) = (-1)^k e^{x^2} \frac{d^k}{dx^k} e^{-x^2},
  \end{align*}
while the Hermite functions read
  \begin{align*}
   \tilde{h}_k (x) = h_k (x) e^{-x^2 /2}
   \period
  \end{align*} 
By taking the product of one-dimensional Hermite functions, we
generalize the definition to $\mathbb{R}^n$,
  \begin{align}
   \phi_{\alpha} (x) = \prod_{k=1} ^{n} \tilde{h}_{\alpha_k} (x_k),
  \end{align}
where 
$\alpha= (\alpha_1,\ldots, \alpha_n)\in \mathbb{N}_0^n$, and $x= (x_1, \ldots, x_n)\in \mathbb{R}^n$. The set 
  \begin{align}
   \{\phi_{\alpha} : \alpha \in \mathbb{N}_0^n \}
  \end{align}
forms a complete orthonormal system for $\Lnorm$ and the functions $\phi_{\alpha}$ solve
  \begin{align}
   (\Delta - |x|^2) \phi_{\alpha} (x) = -(2|\alpha| +n) \phi_{\alpha}
   \period
  \end{align}
Then we have
  \begin{align}
   H \left(\phi_{\alpha} \left(\frac{x}{2} \right) \right)
      = \frac{|\alpha|}{2} \phi_{\alpha} \left(\frac{x}{2}\right)
   \comma \alpha\in{\mathbb N}_0^{n}
  \end{align}
and thus
  \begin{align}
   \sp(H) = \left\{ \frac{m}{2}: m \in \mathbb{N}_0 \right\}
   \period
  \label{EQ18.0}
  \end{align}

\cole
\begin{Lemma} \label{L03}
For a sufficiently large constant $K_0 >0$, set
  \begin{align}
   \epsilon = \frac{1}{K_0 (M_1 ^2 + M_0 ^{2/3})}
   \period
  \end{align} 
Then
  \begin{align}
   \bar{Q} (\tau) = {\mathcal O}\left(\frac{1}{\epsilon}\right),
    \hbox{~~as~} \tau \rightarrow \infty
   \period
  \end{align}
Also,
  \begin{align}
   \bar{Q} (\tau) \to \frac{m}{2},
    \hbox{~~as~} \tau \rightarrow \infty
  \end{align}
for some $m \in \mathbb{N}_0$ for which
  \begin{equation}
   m
   \le
   C (M_0^{2/3}+M_1^2)
   \label{EQ35}
  \end{equation}
where $C$ depends on $q_0$. Moreover,
for every $\delta>0$, there exist
$\eta\in(\log(1/\epsilon),0)$ and constants
$A_1(\delta),A_2(\delta)>0$ such that
  \begin{align}
   A_1(\delta) |t|^{m+\delta} \leq \int u^2 (x,t) G_0 (x,t)\,dx \leq A_2(\delta) |t|^{m-\delta}
   \label{EQ42}
  \end{align}
for all $t\in [-\eta,0)$.
\end{Lemma}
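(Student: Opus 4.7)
The plan is to derive a differential inequality for $\bar{Q}(\tau)$ that yields both boundedness and convergence, and then to identify the limit via the spectral gap \eqref{EQ41} together with the fact that $A(\tau)$ approaches $H$ as $\tau\to\infty$. Rewriting \eqref{EQ16} as $\partial_{\tau}U + A(\tau)U = R_1$ with
\begin{align*}
R_1 = e^{-\tau/2}a_j\partial_jU + e^{-\tau/2}(y_jW_jU + W_j\partial_jU) + e^{-\tau}VU,
\end{align*}
the self-adjointness of $A(\tau)$ and direct differentiation of $\bar{Q}=(AU,U)/\|U\|^2$ give
\begin{align*}
\frac{d\bar{Q}}{d\tau} = \frac{(\partial_{\tau}A\,U,U)}{\|U\|^2} - \frac{2\|(A-\bar{Q}I)U\|^2}{\|U\|^2} + \frac{2(R_1,(A-\bar{Q}I)U)}{\|U\|^2},
\end{align*}
and AM-GM on the cross term produces
\begin{align*}
\frac{d\bar{Q}}{d\tau} \le -\frac{\|(A-\bar{Q}I)U\|^2}{\|U\|^2} + g(\tau),
\end{align*}
where $g$ collects $\|R_1\|^2/\|U\|^2$ and $(\partial_{\tau}A\,U,U)/\|U\|^2$. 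The bounds $(HU,U)\ge\frac{1}{16}\|yU\|^2 - \frac{n}{4}\|U\|^2$ and $(HU,U)\ge\|\nabla U\|^2 - \frac{n}{4}\|U\|^2$ let me control $\|\nabla U\|^2$ and $\|yU\|^2$ by $C(Q+1)\|U\|^2$, yielding
\begin{align*}
g(\tau)\le C\bigl[e^{-\tau}(|a|^2+M_1^2)(Q+1) + e^{-2\tau}M_0^2\bigr].
\end{align*}

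From Lemma~\ref{L01} combined with \eqref{EQ18'}--\eqref{EQ18''} one has $Q(\tau_0)\le\epsilon q_0$, and then $\bar{Q}(\tau_0)=O(1/\sqrt{\epsilon})$ because the correction $Q\to\bar{Q}$ is bounded by $C e^{-\tau_0/2}|a|\sqrt{Q(\tau_0)+1}$. A short bootstrap on the inequality above then delivers $\bar{Q}(\tau)\le C/\epsilon$ throughout $[\tau_0,\infty)$: feeding this bound back into $g$ and integrating, the choice $\epsilon=1/K_0(M_1^2+M_0^{2/3})$ gives $\int_{\tau_0}^{\infty}g\le C(M_0^{2/3}+M_1^2)$, which is precisely where the scaling is forced. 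A lower bound $\bar{Q}\ge -C/\epsilon$ comes from the ground-state energy of $A(\tau)=H-e^{-\tau/2}a_jy_j$ (obtained by completing the square). The differential inequality, together with these two-sided bounds, forces convergence $\bar{Q}(\tau)\to L$ as $\tau\to\infty$ and integrability of $\|(A-\bar{Q}I)U\|^2/\|U\|^2$ on $[\tau_0,\infty)$. Along any sequence $\tau_k\to\infty$ on which the latter quantity vanishes, the triangle inequality $\|(H-A(\tau_k))U\|/\|U\|\le e^{-\tau_k/2}|a|\,\|yU\|/\|U\|\to 0$ combined with \eqref{EQ41} yields $\dist(L,\sp(H))=0$; thus $L=m/2$ for some $m\in\mathbb{N}_0$, and the upper bound on $\bar{Q}$ forces $m\le C(M_0^{2/3}+M_1^2)$.

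For \eqref{EQ42} I integrate
\begin{align*}
\frac{d}{d\tau}\log\|U\|^2 = -2\bar{Q} + \frac{2(R_1,U)}{\|U\|^2}.
\end{align*}
The second term obeys $|(R_1,U)|/\|U\|^2\le\|R_1\|/\|U\|$, which is integrable on $[\tau_0,\infty)$ by the same estimates as for $g$, so its total contribution is a $\tau$-independent constant. Since $\bar{Q}(\tau)\to m/2$, this gives $\log\|U(\tau)\|^2 = -m\tau + o(\tau)$ as $\tau\to\infty$; substituting $\tau=-\log|t|$ and invoking \eqref{EQ18'} translates the asymptotic into \eqref{EQ42} on $[-\eta,0)$ for $\eta$ chosen small enough in terms of $\delta$. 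The main obstacle is the second paragraph: tracking the large ($\sim 1/\epsilon$) $a$-contribution in $g$ while $\bar{Q}$ itself is permitted to be $\sim 1/\epsilon$, and verifying that the scaling $\epsilon=1/K_0(M_0^{2/3}+M_1^2)$ keeps every error integral below $C(M_0^{2/3}+M_1^2)$, which matches exactly the target bound on $m$.
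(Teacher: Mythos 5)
Your overall skeleton (differential identity for $\bar Q$, integrability of $\|(A-\bar Q I)U\|^2/\|U\|^2$, the spectral gap \eqref{EQ41} plus $A(\tau)\to H$ to identify the limit $m/2$, and integration of $\frac{d}{d\tau}\log\|U\|^2=-2\bar Q+2(R_1,U)/\|U\|^2$ to get \eqref{EQ42}) is the same as the paper's. But the central quantitative step --- $\bar Q(\tau)\le C/\epsilon$ with $C$ independent of $K_0$-exponentials --- is not actually proved, and this is a genuine gap, not a routine ``bootstrap''. By keeping the drift term $e^{-\tau/2}a_j\partial_j U$ inside $R_1$ and estimating $|(R_1,(A-\bar QI)U)|$ by Cauchy--Schwarz and \eqref{EQ24}, you produce in $g(\tau)$ the term $Ce^{-\tau}|a|^2\,\bar Q(\tau)_+$. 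Since $|a|\le C/\epsilon$, the coefficient $e^{-\tau}|a|^2$ has $\int_{\tau_0}^\infty e^{-\tau}|a|^2\,d\tau\sim \epsilon|a|^2\sim 1/\epsilon\sim K_0(M_1^2+M_0^{2/3})$, which is large, not $O(1)$. Any Gronwall/bootstrap applied to $\bar Q'\le \beta(\tau)\bar Q_++\gamma(\tau)$ then yields $\bar Q\le e^{\int\beta}(\cdots)\sim e^{C(M_1^2+M_0^{2/3})}$, i.e.\ exactly the exponential loss of type \eqref{EQ54} that the lemma is designed to avoid; it does not give $\bar Q={\mathcal O}(1/\epsilon)$ nor $m\le C(M_0^{2/3}+M_1^2)$. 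You yourself flag this as ``the main obstacle'' but do not resolve it. (A secondary symptom: even granting $\bar Q\le C/\epsilon$, your $g$ would give $\int e^{-\tau}|a|^2\bar Q\sim 1/\epsilon^2$, not the claimed $C(M_0^{2/3}+M_1^2)$.)

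The paper's proof closes precisely this hole by \emph{not} putting the drift term through Cauchy--Schwarz: it integrates by parts in $e^{-\tau/2}a_j(\partial_j U,(A-\bar QI)U)$, using $\int U\partial_jU=0$ and $\int \Delta U\,\partial_jU=0$, so this pairing reduces to the explicitly computable terms $-\tfrac{1}{16}e^{-\tau/2}a_j\int y_jU^2/\|U\|^2+\tfrac12 e^{-\tau}|a|^2$ (the terms $I_2,I_3$ in \eqref{EQ23}). As a result $|a|$ enters the differential inequality only additively (as $e^{-\tau}|a|^2$ and $e^{-\tau/2}|a|\bar Q_+^{1/2}$), while the only term multiplying $\bar Q_+$ with full strength is $Ce^{-\tau}M_1^2\bar Q_+$, whose integral is $CM_1^2\epsilon\le C/K_0\le 1/2$ --- this is exactly where the choice $\epsilon=1/K_0(M_1^2+M_0^{2/3})$ is used, via \cite[Lemma~2.2]{Ku1}, to get $\bar Q(\tau)\le C(\bar Q(\tau_0)_++|a|^2\epsilon+|a|\epsilon^{1/2}+M_1^2|a|^2\epsilon^2+M_1^2\epsilon+M_0^2\epsilon^2)\le C(M_1^2+M_0^{2/3}+1)$. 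Your other ingredients (the initial bound $\bar Q(\tau_0)={\mathcal O}(1/\sqrt\epsilon)$, which is weaker than the paper's $O(q_0^{1/2})$ from \eqref{EQ36} but still sufficient; the lower bound on $\bar Q$ by completing the square; the almost-monotonicity argument for convergence; and the log-integration leading to \eqref{EQ31}/\eqref{EQ42}) are fine, but they all sit downstream of the uniform bound on $\bar Q$, so the proof as written does not establish the lemma.
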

\colb

\begin{proof}[Proof of Lemma~\ref{L03}]
We first divide \eqref{EQ16} by $\Unormlong$ and then take the inner product 
of the resulting equation
with $(A(\tau) - \bar{Q} (\tau)I) U/{\Unormlong}$. Let $B(U)$ denote the right side of \eqref{EQ16}. 
We obtain
  \begin{align}
   &  \left( \frac{\partial_{\tau} U}{\Unorm} , (A(\tau)- \bar{Q}(\tau)I)\frac{U}{\Unorm} \right) + \norm[\bigg]{ (A(\tau)- \bar{Q}(\tau)I)\frac{U}{\Unorm} }^2 + \bar{Q} (\tau) \frac{(U, A(\tau)U)}{\Unorm^2} \nonumber \\ 
   & \indeq = {\bar{Q}}^2 (\tau)  
    +
    \left( \frac{B(U)}{\Unorm}, (A(\tau)- \bar{Q}(\tau)I)\frac{U}{\Unorm} \right)
   \label{EQ19}
  \end{align}
where we abbreviate
  \begin{equation*}
   (\cdot,\cdot)=(\cdot,\cdot)_{L^2({\mathbb R}^{n})}
  \end{equation*}
and
  \begin{equation*}
   \Vert\cdot\Vert
    =\Vert \cdot\Vert_{L^2({\mathbb R}^{n})}
   \period
  \end{equation*}
Note that
  \begin{align}
   \bar{Q} (\tau) \frac{(U, A(\tau)U)}{\Unorm^2} = {\bar{Q}}^2 (\tau),
  \end{align}
so \eqref{EQ19} becomes 
  \begin{align}
   & \left( \frac{\partial_{\tau} U}{\Unorm} , (A(\tau)- \bar{Q}(\tau)I)\frac{U}{\Unorm} \right) + \norm[\bigg]{(A(\tau)- \bar{Q}(\tau)I)\frac{U}{\Unorm}}^2  \nonumber \\ 
    & \indeq = \left( \frac{B(U)}{\Unorm}, (A(\tau)- \bar{Q}(\tau)I)\frac{U}{\Unorm} \right)
   \period
   \label{EQ20}
  \end{align}
On another note, by differentiating \eqref{EQ18}, we get
  \begin{align}
   {\bar{Q}}^{\prime} (\tau) = 
       \left(  2 \frac{(\partial_{\tau} U, A(\tau)U)}{\Unorm^2}
         + \frac{(U, A'(\tau)U)}{\Unorm^2} - \frac{2(\partial_{\tau}U,U)(A(\tau)U,U)}{\Unorm^{4}}\right)
   \period
   \label{EQ32}
  \end{align}
Since $A(\tau)$ is a symmetric operator whose derivative is given by
  \begin{align}
   A'(\tau)U & = \frac{1}{2} e^{-\tau /2} a_j y_j U 
  \end{align}
we obtain
  \begin{align}
   \frac{1}{2} \bar{Q}^{\prime} (\tau) & =  \frac{(\partial_{\tau} U, A(\tau)U)}{\Unorm^2} + \frac{1}{4} e^{-\tau /2} \frac{(U, a_j y_j U)}{\Unorm^2} - \frac{(\partial_{\tau}U, U)(A(\tau)U, U)}{\Unorm^{4}} \nonumber \\
  & = \frac{(\partial_{\tau}U, (A(\tau)- \bar{Q} (\tau)I)U)}{\Unorm^2} + \frac{1}{4} e^{-\tau /2} \frac{(U,a_j y_j U)}{\Unorm^2}
   \period
 \label{EQ21}
  \end{align}
Then, substituting \eqref{EQ21} in \eqref{EQ20}, we get
  \begin{align}
    & \frac{1}{2} \bar{Q}^{\prime} (\tau) + \norm[\bigg]{(A(\tau)- \bar{Q}(\tau)I) \frac{U}{\Unorm}}^2  \nonumber \\
    & \indeq = \frac{1}{4} e^{-\tau /2} \frac{(a_j y_j U,U)}{\Unorm^2} + \frac{e^{-\tau /2} a_j}{\Unorm^2} (\partial_j U, (A(\tau)- \bar{Q}(\tau)I)U) \nonumber \\
    & \indeq\indeq + \frac{1}{\Unorm^2} \left( e^{-\tau /2} (y_j W_j U + W_j \partial_j U)+ e^{-\tau} V U, (A(\tau)- \bar{Q} (\tau)I)U \right)
   \period
   \label{EQ22}
   \end{align}
We integrate by parts on the second term on the right hand side and note that 
$\int U \partial_j U =0$ and $\int \Delta U \partial_j U =0$.
Therefore, \eqref{EQ22} becomes
  \begin{align}
    & \frac{1}{2} \bar{Q}^{\prime} (\tau) + \norm[\bigg]{(A(\tau)- \bar{Q}(\tau)I) \frac{U}{\Unorm}}^2 \nonumber \\
    & \indeq = \frac{e^{-\tau /2} a_j}{4\Unorm^2} \int y_j U^2 \,dy - \frac{e^{-\tau /2} a_j}{16 \|U\|^2} \int y_j U^2 \,dy \nonumber 
    + \frac{e^{- \tau} |a|^2}{2} 
    \\& \indeq\indeq 
        + \frac{e^{-\tau /2}}{\Unorm^2} \int 
              \Bigl(y_j W_j U + W_j \partial_j U + e^{-\tau/2} V U
              \Bigr)(A(\tau) -\bar{Q} (\tau) I)U \,dy. \label{EQ23}
    \nonumber\\&\indeq
     = I_1+I_2+I_3+I_4
   \period
  \end{align}
Now, 
  \begin{align}
   I_1 &= \frac{e^{-\tau /2}}{4 \Unorm^2} a_j\int y_j U^2 \,dy 
     \leq \frac{e^{-\tau /2}}{4 \Unorm^2} |a| \int  |y| U^2 \,dy \\
    & \leq \frac{e^{-\tau /2}}{4 \Unorm^2} |a| {\left(\int  |y|^2 U^2 \,dy \right)}^{1/2},
  \end{align}
and similarly
  \begin{align}
   I_2 +I_3
     \leq \frac{e^{-\tau /2}}{16 \Unorm} |a| {\left( \int  |y|^2 U^2 \,dy \right)}^{1/2} 
     + \frac{ e^{-\tau} |a|^2}{2}
   \period
  \end{align}
As for the last term in \eqref{EQ23}, 
  \begin{align}
   I_4 
    \leq \norm[\bigg]{(A(\tau)- \bar{Q} (\tau)I)\frac{U}{\Unorm}} \left( \frac{e^{-\tau /2}}{\norm{U}} \norm{y_j W_j U + W_j \partial_j U }+ e^{-\tau}\norm{V U}\right)
   \period
  \end{align}
Hence, for the right side of \eqref{EQ23} we have
  \begin{align}
   &I_1 + I_2 + I_3 +I_4
    \nonumber\\&\indeq
    \leq C \frac{e^{-\tau /2} |a|}{\Unorm}  {\left( \int |y|^2 U^2 \,dy \right)}^{1/2} + Ce^{-\tau} |a|^2  
    \nonumber\\&\indeq\indeq
      + e^{-\tau /2} \norm[\bigg]{(A(\tau) 
         - \bar{Q} (\tau)I)\frac{U}{\Unorm}}
      \left( \frac{M_1}{\Unorm} 
       (\Vert y U\Vert
        + \Vert \nabla U\Vert)
        + e^{-\tau /2} M_0 
      \right)
   \period
  \end{align}
It is easy to check that
  \begin{align}
    \Vert \nabla U\Vert^2
    + \Vert y U\Vert^2
    \leq C \left( \bar{Q} (\tau)_{+} + e^{-\tau} |a|^2 +1 \right) 
    \Vert U\Vert^2
    \label{EQ24}
  \end{align}
where
  \begin{equation}
   x_{+}=\max\{x,0\}
   \comma x\in{\mathbb R}
   \period
   \label{EQ67}
  \end{equation}
Consequently, 
  \begin{align}
   & \frac{1}{2} \bar{Q}^{\prime} (\tau) + \norm[\bigg]{(A(\tau)- \bar{Q}(\tau)I) \frac{U}{\Unorm}}^2 \nonumber \\
   & \indeq \leq C e^{-\tau /2} |a| 
         \left( \bar{Q} (\tau)_++ e^{-\tau} |a|^2 +1 \right)^{1/2} + C e^{-\tau} |a|^2 \nonumber \\
   & \indeq\indeq + C e^{-\tau /2} \norm[\bigg]{(A(\tau)- \bar{Q}(\tau)I) \frac{U}{\Unorm}} \left(M_1 (\bar{Q} (\tau)_+ + e^{-\tau} |a|^2 +1)^{1/2}  + e^{-\tau /2} M_0 \right) \label{EQ25}
   \period
  \end{align} 
Applying Young's inequality to the last term, we get 
  \begin{align}
    \bar{Q}^{\prime} (\tau) 
       & \leq C e^{-\tau /2} |a| \left(\bar{Q} (\tau)_+ + e^{-\tau} |a|^2 +1 \right)^{1/2} + C e^{-\tau} |a|^2
   \nonumber\\&
    \indeq + C e^{-\tau} \left( M_1 ^2 (\bar{Q} (\tau)_+  + e^{-\tau} |a|^2 +1) + e^{-\tau} M_0 ^2 \right)
   \nonumber\\&
    \leq C e^{-\tau} M_1 ^2 \bar{Q} (\tau)_+ + C e^{-\tau /2} |a| (\bar{Q} (\tau)_+)^{1/2} + C e^{-\tau /2} |a|
   \nonumber\\&\indeq
   + C e^{-\tau} (|a|^2 + M_1 ^2) + C e^{-2\tau} \left( M_1 ^2 |a|^2 + M_0 ^2 \right) 
   \period
  \end{align}
Observe that
  \begin{align}
   C e^{-\tau_0} M_1 ^2   = C e^{- \log (1/\epsilon)} M_1 ^2
    \leq \frac{C M_1 ^2}{K_0 (M_1 ^2 + M_0 ^{2/3})} \leq \frac{1}{2},
  \end{align}
where the last inequality is obtained by assuming that
$K_0$ is a sufficiently large constant. 
Thus we may now apply \cite[Lemma~2.2]{Ku1}
and estimate 
  \begin{align}
   \bar{Q} (\tau) \leq C \left( \max (\bar{Q} (\tau_0), 0) + M_1 ^2 |a|^2 {\epsilon}^2 + M_1 ^2 {\epsilon} + M_0 ^2 {\epsilon}^2 + |a|^2 \epsilon + |a|{\epsilon}^{1/2} \right)
   \comma \tau\ge \tau_0
   \period
  \end{align}
By \eqref{vo14}, \eqref{EQ18}, \eqref{EQ18'}, and \eqref{EQ18''}
we have
  \begin{align}
   \bar Q(\tau_0)
   &\le
   Q(\tau)
   +
   \left|
    \frac{
     e^{-\tau_0/2}a_j
        }{
     \Vert U\Vert^2
    }
   \int y_j U^2
       \,dy
   \right|
    \nonumber\\&
    \le
    2\epsilon q(\epsilon)
    +
    \frac{C\sqrt{\epsilon}}{\epsilon}
    \frac{
     (H U(\tau_0),U(\tau_0))^{1/2}
        }{
     (U(\tau_0),U(\tau_0))^{1/2}
    }
   \le
    2\epsilon q(\epsilon)
    +
   \frac{C}{\sqrt{\epsilon}}
     (\epsilon q(-\epsilon))^{1/2}
   \nonumber\\&
   \le
    2\epsilon q(\epsilon)
    +
   C q(-\epsilon)^{1/2}
   \le
   2\epsilon
   q_0
   +
   C q_0^{1/2}
   \label{EQ36}
  \end{align}
where we used \eqref{EQ09} in the last inequality.
Recall that $a = -{x_{\epsilon}}/{\epsilon}$, where $x_{\epsilon} \in \Omega$,
which implies $|a| \leq C/ \epsilon$. Therefore,
  \begin{align}
   \bar{Q}(\tau) 
    &\leq C \left( \epsilon q_0+q_0^{1/2}+ M_1 ^2 + 1+ M_0 ^{2/3} + \sqrt{M_1 ^2 + M_0 ^{2/3}} \right)
    \nonumber\\&
    \le
    C \left( q_0+ M_1 ^2 + 1+ M_0 ^{2/3}  \right)  
    \le
    C \left( M_1 ^2 + 1+ M_0 ^{2/3}  \right)  
    \nonumber\\&
    = K_{M_0, M_1}
   \label{EQ26}
  \end{align}
by allowing  $C$ to depend on $q_0$.
Going back to \eqref{EQ25} and applying Young's inequality one more time, we obtain 
  \begin{align}
   & \frac{1}{2} \bar{Q}^{\prime} (\tau) + \norm[\bigg]{(A(\tau)- \bar{Q}(\tau)I) \frac{U}{\Unorm}}^2  \nonumber \\ 
   & \indeq\leq C 
   e^{-\tau /2} |a| \left( \bar{Q} (\tau)_+ + e^{-\tau} |a|^2 + 1 \right)^{1/2} 
     + C e^{-\tau} |a|^2 
   \nonumber\\ &\indeq\indeq
   + Ce^{-\tau} 
        M_1 ^2 (\bar{Q} (\tau)_+ + e^{-\tau} |a|^2 +1)
   +  M_0 ^2 e^{-2\tau}
   \period
   \label{EQ27}
  \end{align}
Integrating \eqref{EQ27} and using the bound on $\bar{Q}$, we get 
  \begin{align}
   &\frac{1}{2} \left( \bar{Q} (\tau) - \bar{Q} (\tau_0)\right)  
         + \int_{\tau_0} ^{\tau} \norm[\bigg]{(A(s)- \bar{Q}(s)I)\frac{U(\cdot,s)}{\norm{U(\cdot,s)}}}^2 \,ds
  \nonumber\\&\indeq
  \leq C |a| \left(K_{M_0, M_1} ^{1/2} (\epsilon^{1/2} - e^{-\tau/2}) \right) + C|a|^2 (\epsilon - e^{-\tau}) \nonumber \\
   & \indeq\indeq + C \left( M_1 ^{2} K_{M_0, M_1} (\epsilon - e^{-\tau}) +M_1^2 |a|^2 (\epsilon^2 - e^{-2\tau}) \right) + C M_0 ^2 (\epsilon^2 - e^{-2\tau})
   \period
 \label{EQ28}
  \end{align}
In order to estimate the second term on the left from below, let
$\Vert v\Vert_{L^2}=1$.
Then
  \begin{equation}
   \left\Vert
     (    A(s)-\bar Q I) v
   \right\Vert^2
   \ge
   \frac12
   \left\Vert
     (   H-\bar Q I) v
   \right\Vert^2
   - 
   C e^{-\tau}
   |a|^2 \bar Q 
   -
   C e^{-\tau}
   |a|^2
   \period
   \label{EQ68}
  \end{equation}
Using this inequality in \eqref{EQ27}
and letting $\tau \to \infty$, we get
  \begin{align}
   \int_{\tau_0}^{\infty} 
      \norm[\bigg]{( H - \bar{Q}(s)I)\frac{U(\cdot,s)}{\norm{U(\cdot,s)}}}^2 \,ds<\infty
   \period
 \label{EQ29}
  \end{align}
Combining this fact with \eqref{EQ41}
and
  \begin{equation}
   \limsup_{\tau\to\infty}
   \bar Q'(\tau)
   \le 0,
   \label{EQ34}
  \end{equation}
which follows from \eqref{EQ27},
we get
  \begin{align}
   \mbox{dist} (\bar{Q}(\tau), \sp(A(\tau))) \to 0
  \end{align} 
as $\tau \to \infty$. 
Using 
\eqref{EQ18.0},
we finally obtain
  \begin{align}
   \bar{Q}(\tau) \to \frac{m}{2}
   \label{EQ33}
  \end{align}
as $\tau \to \infty$ for some 
$m \in \mathbb{N}_0$.
Going back to \eqref{EQ14}, we have
  \begin{align*}
    (\partial_{\tau} U, U) + \bar{Q} (\tau) 
        &= e^{-\tau /2} a_j (\partial_j U, U) + e^{-\tau /2} (y_j W_j U, U) 
     + e^{-\tau/2} (W_j \partial_j U, U) + e^{-\tau} (V U, U)
  \end{align*}
which may be rewritten as 
  \begin{align}
   \frac{1}{2} \frac{1}{\Unorm^2} \frac{d}{d{\tau}} \norm{U}^2 
       + \bar{Q} (\tau) 
   = 
   \frac{f(\tau)}{\norm{U}^2} \label{EQ30}
  \end{align}
where
  \begin{align*}
   \frac{f(\tau)}{\norm{U}^2} = e^{-\tau/2} a_j \frac{(\partial_j U,U)}{\norm{U}^2} + e^{-\tau/2} \frac{(y_j W_j U,U)}{\norm{U}^2} + e^{-\tau/2} \frac{(W_j \partial_j U, U)}{\norm{U}^2} + e^{-\tau} \frac{(V U,U)}{\norm{U}^2}
   \period
  \end{align*}
Note that $(\partial_j U,U)=0$, and thus
  \begin{align*}
   \frac{f(\tau)}{\norm{U}^2} & \le e^{-\tau/2} M_1 \left( \frac{\int |y|U^2\,dy + \int |\nabla U| |U|\,ds}{\norm{U}^2} \right) + e^{-\tau} M_0 \\
   & \leq 2e^{ -\tau/2} M_1 \left( \frac{\int |y|U^2\,dy + \int |\nabla U|^2\,ds +  \int |U|^2\,ds}{\norm{U}^2} \right) + e^{-\tau} M_0 \\
   & \leq C e^{-\tau/2}M_1 \left(|\bar{Q} (\tau)| + e^{-\tau} |a|^2 +1 \right) + e^{-\tau} M_0
   \period
  \end{align*}
As $\bar{Q}(\tau)$ is uniformly bounded,
  \begin{align*}
   \int_{\tau_1}^{\tau} \frac{f(s)\,ds}{\norm{U(\cdot,s)}^2} 
   \leq 
    C K_{M_0, M_1} M_1 (e^{-\tau_1 /2} - e^{-\tau/2}) + C M_1 |a|^2 (e^{-3\tau_1 /2} - e^{-3\tau/2}) + M_0 (e^{-\tau_1}- e^{-\tau})
   \period
  \end{align*}
Integrating \eqref{EQ30}, we get
  \begin{align*}
   & \frac{1}{2} \log{\norm{U(\cdot,\tau)}^2} - \frac{1}{2} \log{\norm{U(\cdot, \tau_1)}^2}  \\
   & \indeq = - \int_{\tau_1} ^{\tau} \bar{Q}(s)\,ds + \int_{\tau_1} ^{\tau} \frac{f(s)}{\norm{U(\cdot,s)}^2} \,ds \\
   & \indeq = -\frac{m}{2} (\tau - \tau_1) - \int_{\tau_1} ^{\tau} \left( \bar{Q}(s) -\frac{m}{2} \right)\,ds + {\mathcal O}(e^{-\tau_1 /2} - e^{-\tau /2}),
    \hbox{~~as~} \tau \to \infty
   \period
  \end{align*}
By \eqref{EQ28}, we have that for all $\delta >0$
there exists $t_1(\delta)>0$ such that
  \begin{align*}
   - \delta (\tau - \tau_1) \leq \log{\norm{U(\tau)}^2} - \log{\norm{U(\tau_1)}^2} + m(\tau - \tau_1) \leq \delta (\tau - \tau_1)
   \comma \tau \geq \tau_1 (\delta)
   \period
  \end{align*}
Therefore, 
  \begin{align*}
   e^{-\delta (\tau - \tau_1)} \leq \frac{e^{\tau m} \norm{U(\tau)}^2}{e^{\tau_1 m} \norm{U(\tau_1)}^2} \leq e^{\delta (\tau -\tau_1)}
   \comma \tau \geq \tau_1
   \period
  \end{align*}
Consequently, for all $\delta >0$, there exists $\tau_1 \geq \tau_0$ such that 
  \begin{align}
   A_1(\delta) e^{-\delta \tau} \leq e^{m \tau} \norm{U(\tau)}^2 \leq A_2(\delta) e^{\delta \tau} \label{EQ31}
   \comma \tau \geq \tau_1
  \end{align}
for some positive constants $A_1(\delta)$ and $A_2(\delta)$. 
Switching back to the original variables, we obtain 
\eqref{EQ42}, and the proof is concluded.
\end{proof}

A function $f(x,t)$ is homogeneous of degree $d$ if for any
$\lambda>0$ and $(x,t) \in \mathbb{R}^n \times \mathbb{R} \setminus \{(0,0)\}$
if it satisfies
  \begin{equation*}
   f(\lambda x, \lambda ^2 t) = \lambda ^d f(x,t)
   \period
  \end{equation*}
A polynomial $P(x,t)$ of degree at most $d \in \mathbb{N}$ 
can be decomposed into a sum of homogeneous polynomials 
whose degree of homogeneity is at most $d$.
The following elementary results regarding polynomials play an important role in the subsequent argument.

\cole
\begin{Lemma}\label{L04}
Let $Q = \sum_{|\mu|+ 2l = d} C_{\mu,l} x^{\mu} t^l$ 
be a homogeneous polynomial of degree $d\in{\mathbb N}_0$.
Then
  \begin{equation*}
   \int_{\mathbb{R}^n} Q(x,t) G_0 (x,t) \,dx
  = C_0 |t|^{d/2}
  \end{equation*}
where $C_0$ is a constant depending on the polynomial.
\end{Lemma}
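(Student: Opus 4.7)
The statement reduces by linearity to a single monomial $x^{\mu} t^{l}$ with $|\mu|+2l=d$, so my plan is to prove
\[
   \int_{\mathbb{R}^n} x^{\mu} t^{l} G_0(x,t)\,dx = c_{\mu,l}\, |t|^{d/2}
\]
for a constant $c_{\mu,l}$, then sum and set $C_0=\sum_{|\mu|+2l=d} C_{\mu,l}\, c_{\mu,l}$.

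The key step is the parabolic rescaling $y = x/\sqrt{|t|}$, which is adapted to the self-similar structure of $G_0$. Under this substitution, $dx = |t|^{n/2}\,dy$ cancels exactly against the prefactor $|t|^{-n/2}$ in $G_0$ from \eqref{EQ10}, while $x^{\mu}=|t|^{|\mu|/2} y^{\mu}$ and the exponential becomes the fixed Gaussian $e^{-|y|^2/4}$ (independent of $t$). Therefore
\[
   \int_{\mathbb{R}^n} x^{\mu} G_0(x,t)\,dx
   = |t|^{|\mu|/2} \int_{\mathbb{R}^n} y^{\mu} e^{-|y|^2/4}\,dy
   = |t|^{|\mu|/2} m_{\mu},
\]
where $m_\mu$ is a fixed Gaussian moment (which vanishes whenever any component of $\mu$ is odd, but that only affects which $c_{\mu,l}$ are nonzero).

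Multiplying by $t^l$ produces $t^l |t|^{|\mu|/2} m_\mu$. Writing $t^l = \mathrm{sgn}(t)^l\, |t|^l$ and using $|\mu|/2 + l = d/2$, this equals $\mathrm{sgn}(t)^l\, m_\mu\, |t|^{d/2}$, which is the claimed form with $c_{\mu,l}=\mathrm{sgn}(t)^l m_\mu$ (constant on each half-line $t>0$ and $t<0$; in the application $t<0$, so $\mathrm{sgn}(t)^l=(-1)^l$ is a fixed sign). Summing over $\mu,l$ with $|\mu|+2l=d$ gives the result.

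There is no real obstacle here — the proof is essentially the observation that $G_0$ is self-similar under the parabolic scaling $x\mapsto \sqrt{|t|}\,x$, which matches the parabolic homogeneity assumption $|\mu|+2l=d$ exactly. The only point requiring any attention is bookkeeping of signs when $t<0$, but this does not affect the conclusion since $C_0$ is allowed to depend on the polynomial (and implicitly on $\mathrm{sgn}(t)$, which is constant on the relevant time interval $t\in[-\eta,0)$ in the applications).
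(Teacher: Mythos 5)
Your proof is correct and follows essentially the same route as the paper's: reduce to a single monomial and use the parabolic rescaling $y=x/\sqrt{|t|}$, under which $G_0$ becomes a fixed Gaussian and the integral collapses to a Gaussian moment times $|t|^{|\mu|/2+l}=|t|^{d/2}$. Your explicit bookkeeping of the sign via $t^{l}=\mathrm{sgn}(t)^{l}|t|^{l}$ is a detail the paper's computation absorbs silently into the constant, and since $t<0$ on the relevant interval this makes no difference to the conclusion.
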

\colb

\begin{proof}[Proof of Lemma~\ref{L04}]
By substituting $Q$ in the integral, we obtain
  \begin{align}
	&\int_{\mathbb{R}^n} Q (x,t) G_0 (x,t)\,dx  =  \sum_{|\mu|+ 2l = d} C_{\mu, l} \int_{\mathbb{R}^n} x^{\mu} t^l \frac{e^{-|x|^{2} / 4|t| }	}{|t|^{n/2}}\,dx
      \nonumber\\&\indeq
	 =  \sum_{|\mu|+ 2l =d} C_{\mu,l} |t|^{l+ |\mu|/2} \int_{\mathbb{R}^n} \frac{x^{\mu}}{|t|^{|\mu|/2}} \frac{e^{-|x|^{2} / 4|t| }	}{|t|^{n/2}} \,dx
       \nonumber\\&\indeq
	 =  \sum_{|\mu|+ 2l =d} C_{\mu,l} |t|^{l+ |\mu|/2} \int_{\mathbb{R}} \cdots \int_{\mathbb{R}} \left( \frac{x_1}{|t|^{1/2}}\right)^{\mu_1} \cdots\left( \frac{x_n}{|t|^{1/2}} \right)^{\mu_n}\frac{e^{-|x|^{2} / 4|t| }}{|t|^{n/2}} \,dx_1 \cdots dx_n
    \nonumber\\&\indeq
	 =  
          |t|^{d/2} 
          \sum_{|\mu|+ 2l =d} {C}_{\mu,l}
             \Gamma(\mu_1+1) \cdots \Gamma(\mu_n+1)   
  \end{align}	
and the lemma is established.
\end{proof}

\nnewpage
\cole
\begin{Lemma} 
\label{L05}
Consider the polynomial $ x^{\mu}t^l$ with $|\mu|+ 2l =
2d$ for some integer $d \geq 0$. 
If all the coordinates of 
$\mu = (\mu_1, \mu_2,\ldots, \mu_n)$ are even, then
for all $r>0$
  \begin{equation}
   \int_{B(0,r)} 
     x^{\mu} t^l G_0 (x,t) \,dx 
     = C_0 |t|^d + {\mathcal O}\left( |t|^{d+1} \right),
   \hbox{~~as~$t\to0-$}
   \label{EQ47}
  \end{equation} 
where $C_0$ is a constant.
If $\mu_i$ is an odd integer for some $i \in \{1,\cdots,n \}$, then 
  \begin{equation}
   \int_{B(0,r)} x^{\mu} t^l G_0 (x,t)\,dx = 0
   \period
   \label{EQ45}
  \end{equation}
\end{Lemma}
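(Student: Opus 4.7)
The plan is to mimic the substitution used in the proof of Lemma~\ref{L04}, with the additional bookkeeping required to handle the truncation to the ball $B(0,r)$.

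First I would perform the rescaling $y = x/|t|^{1/2}$ on the integral $\int_{B(0,r)} x^{\mu} t^{l} G_{0}(x,t)\,dx$. As in Lemma~\ref{L04}, this converts the Gaussian factor $|t|^{-n/2} e^{-|x|^{2}/4|t|}$ into $e^{-|y|^{2}/4}\,dy$ and pulls out a factor $|t|^{l+|\mu|/2} = |t|^{d}$, giving
\begin{align*}
  \int_{B(0,r)} x^{\mu} t^{l} G_{0}(x,t)\,dx
  = |t|^{d} \int_{B\bigl(0, r/|t|^{1/2}\bigr)} y^{\mu} e^{-|y|^{2}/4}\,dy
  \period
\end{align*}

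Next, for the second statement, I would simply observe that if some $\mu_{i}$ is odd, then the integrand $y^{\mu} e^{-|y|^{2}/4}$ is odd in $y_{i}$, while the ball $B(0, r/|t|^{1/2})$ is symmetric under the reflection $y_{i}\mapsto -y_{i}$; hence the integral vanishes and \eqref{EQ45} follows.

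For the first statement, assume every $\mu_{i}$ is even, and split the $y$-integral as
\begin{align*}
  \int_{B(0, r/|t|^{1/2})} y^{\mu} e^{-|y|^{2}/4}\,dy
   = \int_{\mathbb{R}^{n}} y^{\mu} e^{-|y|^{2}/4}\,dy
   - \int_{\mathbb{R}^{n}\setminus B(0, r/|t|^{1/2})} y^{\mu} e^{-|y|^{2}/4}\,dy
  \period
\end{align*}
The first piece is a constant $C_{0}$, explicitly computable as a product of one-dimensional Gaussian moments. The second piece is the main thing to estimate. For $R = r/|t|^{1/2}$ large, a crude bound such as
\begin{align*}
  \int_{\mathbb{R}^{n}\setminus B(0,R)} |y|^{|\mu|} e^{-|y|^{2}/4}\,dy
  \leq C R^{|\mu|+n-2} e^{-R^{2}/8}
\end{align*}
decays faster than any power of $|t|$ as $t \to 0^{-}$, so multiplying by $|t|^{d}$ gives a tail contribution that is $o(|t|^{N})$ for every $N$, in particular $\mathcal{O}(|t|^{d+1})$. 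Combining these gives \eqref{EQ47}.

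The step that requires the most care is the tail estimate and the identification of $C_{0}$, but both are routine Gaussian integral calculations; no genuine obstacle arises, and the argument is essentially a refinement of Lemma~\ref{L04} that tracks the error incurred by restricting the domain of integration to a fixed ball.
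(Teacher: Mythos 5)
Your proposal is correct and takes essentially the same route as the paper: rescale by $|t|^{1/2}$ to pull out the factor $|t|^{d}$, dispose of the odd case by the reflection symmetry $y_i \mapsto -y_i$, and control the truncation to the ball by a Gaussian tail estimate. The only cosmetic difference is that the paper phrases the tail via one-dimensional incomplete Gamma integrals with an $\mathcal{O}(|t|)$ error, while your radial bound gives super-polynomial decay, which is more than sufficient for the stated $\mathcal{O}(|t|^{d+1})$.
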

\colb

\begin{proof}[Proof of Lemma~\ref{L05}]
For $|\mu|>0$, we have 
  \begin{align}
    &
    \left|
    \int_{B(0,r)} 
      x^{\mu} t^l G_0 (x,t) \,dx 
    \right|
    \nonumber\\&\indeq
    \le  
    C
    |t|^{l+|\mu|/2}  
     \prod_{i=1}^{n} \int_{0}^{r} 
        \left(
          \frac{x_i}{|t|^{1/2}} 
        \right)^{\mu_i} 
       \frac{e^{-|x_i|^2 /4|t|}}{|t|^{1/2}} \,dx_i
    \nonumber\\&\indeq
     =
     K
     |t|^{d}
    \prod_{i=1}^n \int_0^{r^2/4|t|} z^{{\mu_i}/2 - 1/2} e^{-z} \,dz
   \label{EQ48}
  \end{align}
where 
$K$ is a fixed constant.
It is easy to check that
  \begin{align}
   \left|
   C_0
   -
    K
    \prod_{i=1}^n \int_0 ^{r^2/4|t|} z^{{\mu_i}/2 - 1/2} e^{-z} \,dz
   \right|
   = {{\mathcal O}} (|t|),
   \hbox{~~as~$t\to0-$}
   \label{EQ44}
  \end{align}
where
  \begin{equation}
   C_0
   =
    K
    \prod_{i=1}^n \int_0^{\infty} z^{{\mu_i}/2 - 1/2} e^{-z} \,dz
   \label{EQ43}
  \end{equation}
and \eqref{EQ47} follows.
If $\mu_i$ is odd for some $i$, then
we have \eqref{EQ45} by symmetry.
%
%
\end{proof}

\startnewsection{The proof of the main theorem}{sec5}

Before concluding with the proof of the main theorem, we 
need a statement
connecting the order of vanishing with the degree of the
eigenfunction.

\nnewpage
\cole
\begin{Lemma} \label{L06}
Consider a solution $u \in W_{\infty} ^{2,1} (\Omega \times I)$ to the equation
    \begin{align}
    \partial_t u + \Delta u = a_j \partial_j u + w_j(x,t) \partial_j u + v(x,t) u
    \label{EQ01}
  \end{align}
for $t \in \left[- \epsilon, 0 \right]$ with $\epsilon \in (0,1)$ where $a_j$ are constants and where $w_j$ and $v$ satisfy \eqref{EQ12} and~\eqref{EQ13},
and suppose
that $u$ has finite order of vanishing $d\in{\mathbb N}_0$ at
$(0,0)$. 
With $m\in{\mathbb N}$,
assume that 
for all $\delta >0 $
there exist $\eta\in(0,\epsilon]$ and 
$A_1(\delta), A_2(\delta)>0$ such that
    \begin{align}
    A_1(\delta) |t|^{m +\delta} \leq \int_{\mathbb{R}^n} u^2 (x,t) G_0 (x,t) \,dx 
      \leq A_2(\delta) |t|^{m- \delta}
   \comma t \in \left[ - \eta, 0 \right]
   \period
  \label{EQ02}
  \end{align}
Then $d= m$.
\end{Lemma}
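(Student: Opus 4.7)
The plan is to deduce $d=m$ by computing $\int u^2 G_0\,dx$ in two different ways: once from below and above via the hypothesis \eqref{EQ02}, and once directly from a polynomial expansion of $u$ near $(0,0)$ using the Alessandrini--Vessella approximation theorem referenced earlier in the introduction. The order of vanishing $d$ will pin down the dominant power of $|t|$ in the integral.

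First I would invoke the parabolic polynomial approximation of Alessandrini--Vessella: since $u$ is a bounded solution of \eqref{EQ01} vanishing to finite order $d$ at $(0,0)$, there exists a caloric polynomial $P$ whose lowest-order parabolically homogeneous component $P_d$ has parabolic degree exactly $d$ and is not identically zero, and
\begin{equation*}
  u(x,t) = P_d(x,t) + P_{d+1}(x,t) + \cdots + P_{D}(x,t) + R_D(x,t)
\end{equation*}
where each $P_j$ is parabolically homogeneous of degree $j$ and $R_D$ vanishes to order strictly greater than $D$ at the origin in the parabolic $L^2$ sense. I would choose $D$ large (depending on $d$, $m$) so that $R_D$ is quantitatively negligible compared to $|t|^d$.

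Next I would compute the Gaussian integral term by term. For the principal term, since $P_d^2$ is a homogeneous polynomial of parabolic degree $2d$, Lemma~\ref{L04} gives
\begin{equation*}
  \int_{\mathbb{R}^n} P_d(x,t)^2 G_0(x,t)\,dx = C_0|t|^{d},
\end{equation*}
where
$
  C_0 = \int_{\mathbb{R}^n} P_d(x,-1)^2 G_0(x,-1)\,dx > 0
$
because $P_d\not\equiv 0$ and $G_0>0$. The cross and higher-order terms $\int P_j P_k G_0\,dx$ with $(j,k)\neq (d,d)$ are homogeneous of degree $j+k>2d$, so by Lemma~\ref{L04} each contributes at most $O(|t|^{d+1/2})$. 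The remainder contribution $\int R_D^2 G_0\,dx$ I would split as $\int_{|x|\le 1}+\int_{|x|>1}$: the outer integral is superpolynomially small because $u$ is bounded (by periodicity and $W^{2,1}_\infty$ regularity) and $\int_{|x|>1} G_0(x,t)\,dx \lesssim e^{-1/(8|t|)}$, while the inner integral is estimated via Caccioppoli-type bounds on parabolic annuli of radius $\sim |t|^{1/2}$ and the vanishing order of $R_D$, giving $o(|t|^d)$ once $D$ is taken large enough. Putting it together,
\begin{equation*}
  \int_{\mathbb{R}^n} u(x,t)^2 G_0(x,t)\,dx = C_0|t|^d\bigl(1+o(1)\bigr),
  \qquad t\to 0^-.
\end{equation*}

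Finally, I would compare with the hypothesis \eqref{EQ02}. The two-sided bound $A_1(\delta)|t|^{m+\delta}\le C_0|t|^d(1+o(1))\le A_2(\delta)|t|^{m-\delta}$ for every $\delta>0$ and all sufficiently small $|t|$ forces $d=m$, by sending $\delta\to 0^+$ after $|t|\to 0^-$. The main technical obstacle I expect is the remainder estimate $\int R_D^2 G_0\,dx = o(|t|^d)$: controlling $R_D$ globally on $\mathbb{R}^n$ rather than only in a shrinking parabolic cylinder requires combining the local Alessandrini--Vessella approximation with the exponential decay of $G_0$ and the boundedness of $u$, and this is where one has to be careful to match the orders of $|t|$ quantitatively.
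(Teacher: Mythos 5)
Your overall strategy (compute $\int u^2G_0\,dx$ asymptotically from a polynomial approximation and compare with \eqref{EQ02}) is close in spirit to the paper's, but the step you build it on is not available. You assume a full parabolic Taylor-type expansion $u=P_d+P_{d+1}+\cdots+P_D+R_D$ with each $P_j$ caloric and homogeneous of degree $j$ and with $R_D$ vanishing to order $>D$ for $D$ as large as you like. The Alessandrini--Vessella theorem does not give this: for the equation \eqref{EQ01} the right-hand side is $f=(a+w)\cdot\nabla u+vu$ with $v,w$ merely bounded (\eqref{EQ12}--\eqref{EQ13}), so $f$ inherits only the vanishing $|f|\le C|(x,t)|^{d-1}$, and the theorem then yields a \emph{single} caloric polynomial of degree at most $d$ with error $O(|(x,t)|^{d+\alpha})$, $\alpha\in(0,1)$, as in \eqref{EQ05}. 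Pushing the expansion to orders $d+1,\dots,D$ would require $f$ itself to admit a polynomial expansion, i.e.\ smoothness of $v$ and $w$, which is not assumed; so the decomposition underlying your asymptotic $\int u^2G_0\,dx=C_0|t|^d(1+o(1))$, and in particular your remainder estimate $\int R_D^2G_0\,dx=o(|t|^d)$ ``for $D$ large,'' cannot be justified in this setting. This is the genuine gap.

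The argument can be repaired without any expansion beyond degree $d$, and this is essentially what the paper does by splitting the two inequalities. For $d\le m+\delta$ one only needs the pointwise vanishing bound $|u(x,t)|\le\gamma|(x,t)|^d$ together with Lemmas~\ref{L04} and~\ref{L05} and the tail bound \eqref{EQ46}, giving $\int_{B(0,R)}u^2G_0\,dx\le C|t|^d$, which against the lower bound in \eqref{EQ02} forces $d\le m+\delta$. For $d\ge m-\delta$ one uses only the degree-$d$ Alessandrini--Vessella polynomial: $\int_{B(0,1)}P_d^2G_0\,dx\le C\int_{B(0,1)}u^2G_0\,dx+C\int(|x|^2+|t|)^{d+\alpha}G_0\,dx\le C|t|^{m-\delta}+O(|t|^{d+\alpha})$, while the left side equals $C_0|t|^d+O(|t|^{d+1})$ with $C_0>0$ since $P_d\not\equiv0$; comparing powers gives $d\ge m-\delta$. (If you prefer your one-shot asymptotic, it can also be salvaged from the single polynomial by estimating the cross term with Cauchy--Schwarz, yielding $\int_{B(0,1)}u^2G_0\,dx=C_0|t|^d+O(|t|^{d+\alpha/2})$ — no higher-order components are needed.) Your treatment of the region $|x|>1$ via boundedness of $u$ and Gaussian decay is fine and matches \eqref{EQ46}.
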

\colb

\begin{proof}[Proof of Lemma~\ref{L06}]
Since the statement clearly holds when $d=0$, we assume $d\in{\mathbb N}$.
First, assume that \eqref{EQ02} holds
for some fixed $\delta>0$.
Fix $\delta>0$. Since the degree of vanishing of $u$ at $(0,0)$ is $d>0$,
there exists $\gamma>0$ such that
    \begin{align}
     |u(x,t)| 
     \leq 
     \gamma |(x,t)|^d  
     =
     \gamma\left(|x|^2 + |t|\right)^{d/2} \label{EQ03}
    \end{align}
for all $(x,t) \in Q_1$. Note that for any $R>0$ we have
  \begin{align}
   & \int_{\mathbb{R}^n \setminus B(0,R)} u^2 (x,t)G_0 (x,t) \,dx 
   \leq
    \Vert u\Vert_{L^{\infty} (\mathbb{R}^n)}^2 
      \int_R^{\infty} \int_{\partial B(0,r)} 
          \frac{e^{-r^2 /4|t|}}{|t|^{n/2}} \,dS_y 
      \,dr 
   \le
   C
   \Vert u\Vert_{L^\infty}^2
   \frac{|t|^{1/2}}{R}
   e^{-R^2/8|t|}
   \period
   \label{EQ46}
  \end{align}
By \eqref{EQ02} and \eqref{EQ46}, 
for all $\delta>0$ there 
exist $\eta\in(0,\epsilon)$
and $A_1(\delta),A_2(\delta)>0$ such that
  \begin{equation}
   \frac12 A_1(\delta) |t|^{m +\delta} \leq \int_{B(0,R)} u^2 (x,t) G_0 (x,t) \,dx \leq A_2(\delta) |t|^{m- \delta}
   \comma t\in[-\eta,0]
   \period
 \label{EQ04}
  \end{equation}
Furthermore, 
by Lemmas~\ref{L04} and~\ref{L05}, 
  \begin{align}
   & \int_{B(0,R)} u^2 (x,t) G_0 (x,t) \,dx
    \leq 
     C\gamma^2 
       \int_{B(0,R)} 
         \left(|x|^2 + |t| \right)^d G_0 (x,t) 
       \,dx 
  \leq C  |t|^d
   \label{EQ49}
  \end{align}
where $C$ depends on $R$. 
Combining this with \eqref{EQ04}, we get 
  \begin{equation*}
    \frac12 A_1(\delta) |t|^{m+\delta} \leq C |t|^d
   \comma t \in \left[ -\eta, 0 \right]
  \end{equation*}
which yields
  \begin{equation}
   d\leq m+ \delta
   \period
   \label{EQd_1}
  \end{equation}

For the other direction, we use 
\cite[Theorem~1.1]{AV}
on a structure of a solution in a neighborhood
of a point where $u$ vanishes.
Recall that $u$ solves
   \begin{align}
    \partial_t u - \Delta u & = a_j \partial_j u + w_j (x,t) \partial_j u + v(x,t) u  = f(x,t), \nonumber 
   \end{align}
and for all $(x,t) \in Q_1$,
   \begin{align}
    |f(x,t)|  &  \leq  \left( |a| + \Vert w\Vert_{L^{\infty} (\Omega
    \times I)} \right) \max_j |\partial_j u(x,t)| + \Vert v(x,t)\Vert_{L^{\infty} (\Omega \times I)} |u(x,t)|\nonumber \\
    & \leq C |(x,t)|^{d-1}
   \end{align} 
where $C$ depends on $u$, $v$, and $w$.
This implies that for any $r \leq 1$, and $\alpha \in (0,1)$
  \begin{align*}
   \Vert f\Vert_{L^{\infty} (Q_r)} & \leq C r^{d-1}
   \period
  \end{align*}
Therefore, by \cite[Theorem~1.1]{AV},
for the solution $u \in W^{2,1}_{\infty} (Q_1)$ of 
$(\partial_t+\Delta)u = f$, 
there exists a {\em homogeneous} caloric polynomial $P_d$ of degree 
less than or equal to $d$ 
such that for any $\alpha \in (0,1)$,
  \begin{equation}
   |u(x,t) - P_d (x,t)|
     \leq C |(x,t)|^{d+\alpha}
  \label{EQ05}
  \end{equation}
for $(x,t) \in Q_{1}$. 
Since the degree of vanishing of $u$ is $d$,
the degree of $P_d$ has to be $d$.
%
%
%
Thus we may write
  \begin{equation*}
   P_d = \sum_{|\mu|+ 2l =d} C_{\mu,l} x^{\mu} t^l
  \end{equation*}
where not all $C_{\mu,d}$ equal zero.
Then, 
by \eqref{EQ02} and~\eqref{EQ05}, 
we have for all $t \in \left[- \eta, 0 \right]$,
  \begin{align}
   & \int_{B(0,1)} 
      P_d ^2 G_0 (x,t) \,dx \nonumber 
  \\ & \indeq 
    \leq 
       C \int_{B(0,1)} u^2 G_0 (x,t) \,dx 
     + C \int_{B(0,1)} 
        \bigl( 
           |x|^2 + |t| 
        \bigr)^{d + \alpha} 
        G_0 (x,t) \,dx \nonumber 
       \\& \indeq 
         \leq 
          C |t|^{m- \delta} 
          + C \int_{\mathbb{R}^n} \left(|x|^2 + |t| \right)^{d+\alpha}  G_0 (x,t) \,dx
       \nonumber \\ & \indeq 
         = C_1 |t|^{m-\delta} + {\mathcal O}(|t|^{d+ \alpha})
   \period
 \label{EQ07}
  \end{align}
On the other hand,
   \begin{align}
    &\int_{B(0,1)}
       P_d ^2 G_0 (x,t) \,dx
        =  \sum_{|\mu| +2l =2d} 
           C_{\mu,l} 
           \int_{B(0,1)} x^{\mu} |t|^l G_0 (x,t) \,dx  
    =  C_0|t|^d + {\mathcal O}\left( |t|^{d+1} \right),
    \mbox{~~as~$t\to0-$}
    \label{EQ08}
   \end{align}
by Lemma~\ref{L05} where $C_0>0$. Consequently, we have
   \begin{equation*}
    C_0|t|^d + {\mathcal O}\left( |t|^{d+1} \right) 
     \leq 
   C_1 |t|^{m- \delta} + C|t|^{d+ \alpha},
    \hbox{~~as~} t \to 0-
   \end{equation*}
which implies 
   \begin{equation}
    d\geq m- \delta
   \period
    \label{EQd_2}
   \end{equation}
By \eqref{EQd_1} and~\eqref{EQd_2}, we get
   \begin{equation*}
    m-\delta \leq d \leq m+\delta,
   \end{equation*}
 and since this holds for all $\delta >0$, we conclude that $d=m$.
\end{proof}

\nnewpage 
\begin{proof}[Proof of Theorem~\ref{thm1}]
In Lemmas~\ref{L03} and~\ref{L06}, we have shown that 
  \begin{align*}
   \bar{Q} (\tau) 
   \leq
   C (M_1^2+M_0^{2/3})   
  \end{align*} 
for all $\tau \in [\tau_0, \infty)$, and 
\begin{align*}
\bar{Q}(\tau) \to \frac{m}{2},
\end{align*}
as $\tau \to \infty$, where $m= O_{(0,0)} (u)$. Combining these two facts,
we arrive at the desired conclusion.
\end{proof}

\startnewsection{The case ${\mathbb R}^{n}$}{sec6}
Here we present the modifications when the periodicity
assumption is removed.
Let $u$ be a solution of \eqref{EQ11} defined for
$(x,t)\in \mathbb{R}^n \times I$, where
$I$ is an open interval containing $[T_0, T_0 +T]$.
Instead of periodicity,
we assume
\eqref{EQ55}, where $C$ is a constant.
The coefficients $v$ and $w$ 
satisfy
\eqref{EQ12} and \eqref{EQ13}.
For simplicity, we assume
$M_0, M_1 \geq 1$.
Let
$q_0$ be an upper bound for the Dirichlet quotients of $u$ on
$[T_0,T_0+T]$, i.e.,
  \begin{equation}
   \frac{
    \Vert \nabla u(\cdot,t)\Vert_{L^2}^2
       }{
    \Vert u(\cdot,t)\Vert_{L^{2}}^2  
   }
   \le
   q_0
   \comma t\in [T_0,T_0+T]
   \period
   \label{EQ66}
  \end{equation}
The following is the analog of Theorem~\ref{thm1}.

\cole
\begin{Theorem}\label{thm2}
Let $u \in W^{2,1}_{\infty} (\Omega \times I)$ be a 
nontrivial solution of \eqref{EQ11} 
with $w_j$ and $v$ satisfying \eqref{EQ12} and~\eqref{EQ13},
respectively. Then the order of vanishing of $u$ at $(0, T_0+T)$
satisfies
  \begin{align}
   O_{(0,T_0+T)} (u) \leq C ( M_1 ^2 + M_0 ^{2/3} )
  \end{align}
where the constant depends on $T$, $M$, and $q_0$.
\end{Theorem}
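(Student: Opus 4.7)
The plan is to follow the proof of Theorem~\ref{thm1} step by step, modifying only the ingredient where periodicity was essentially used. By a translation in the time variable we reduce to the case $(0,T_0+T)=(0,0)$, so that $u$ is defined on $\mathbb{R}^n\times I$ with $I\supset[-T,0]$ and the doubling condition \eqref{EQ55} holds on $[-T,0]$. Lemmas~\ref{L03}--\ref{L06} survive essentially unchanged; the only step that genuinely used periodicity is the averaging argument in Lemma~\ref{L01}, which I would replace by an argument that invokes \eqref{EQ55}.

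The non-periodic analog of Lemma~\ref{L01} I would prove is: for every $\epsilon\in(0,T)$ sufficiently small there exists $x_\epsilon\in B_2$ with
\begin{align*}
\frac{\epsilon\int_{\mathbb{R}^n}|\nabla u(x_\epsilon+y,-\epsilon)|^2 G_0(y,-\epsilon)\,dy}{\int_{\mathbb{R}^n}u(x_\epsilon+y,-\epsilon)^2 G_0(y,-\epsilon)\,dy}\le C\epsilon q(-\epsilon),
\end{align*}
with $C$ depending only on $M$. To prove this, assume for contradiction that the reverse inequality with some $\lambda>0$ holds for every $x\in B_2$. Integrating in $x$ over $B_2$ and swapping the order of integration gives
\begin{align*}
\lambda\int u(z,-\epsilon)^2\left(\int_{B_2} G_0(z-x,-\epsilon)\,dx\right)dz\le\int|\nabla u(z,-\epsilon)|^2\left(\int_{B_2} G_0(z-x,-\epsilon)\,dx\right)dz.
\end{align*}
For $z\in B_1$ one has $B_{1/2}(z)\subset B_2$, so the inner integral is bounded below by $\int_{B_{1/2}} G_0(y,-\epsilon)\,dy\ge(4\pi)^{n/2}/2$ for $\epsilon$ sufficiently small (this integral converges to $(4\pi)^{n/2}$ as $\epsilon\to 0^+$), while for arbitrary $z$ it is trivially bounded above by $(4\pi)^{n/2}$. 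Combined with \eqref{EQ55} this yields $\lambda\le 2Mq(-\epsilon)\le 2Mq_0$, and taking $\lambda=3Mq_0$ produces the desired contradiction.

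With $x_\epsilon\in B_2$ in hand, I would perform the same time-dependent change of variable $\tilde u(x,t)=u(x-(x_\epsilon/\epsilon)t,t)$ as in Section~\ref{sec3}; this preserves the vanishing point $(0,0)$ and produces the drift $a=-x_\epsilon/\epsilon$ with $|a|\le 2/\epsilon$, matching the order used in the periodic case, so the estimates in the proof of Lemma~\ref{L03} go through unchanged. The bound \eqref{EQ36} becomes $\bar Q(\tau_0)\le C(\epsilon q_0+q_0^{1/2})$ with an $M$-dependent constant, still producing $\bar Q(\tau)\to m/2$ with $m\le C(M_0^{2/3}+M_1^2)$. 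Lemmas~\ref{L04}--\ref{L06} are purely local and do not use periodicity; for the tail estimate \eqref{EQ46} in the proof of Lemma~\ref{L06} one only needs local boundedness of $u$, afforded by $u\in W^{2,1}_\infty$ in a neighborhood of the origin together with the super-polynomial decay of the Gaussian factor as $t\to 0^-$.

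The main obstacle is exactly this non-periodic version of Lemma~\ref{L01}: the clean averaging over the torus is unavailable and must be replaced by an argument that exploits \eqref{EQ55}. The key observation is that \eqref{EQ55} plays precisely the role that periodicity played, namely it ensures that the global $L^2$ mass of $u$ is controlled by its mass in $B_1$, so that testing the weighted quotient against a bounded region is enough to derive a contradiction via the Dirichlet quotient assumption. Combining the non-periodic version of Lemma~\ref{L03} with Lemma~\ref{L06} as in the proof of Theorem~\ref{thm1}, one concludes $O_{(0,T_0+T)}(u)=m\le C(M_1^2+M_0^{2/3})$ with constant depending on $T$, $M$, and $q_0$.
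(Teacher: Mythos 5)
Your proposal is correct and follows essentially the same route as the paper: the paper's own proof of Theorem~\ref{thm2} consists precisely of replacing Lemma~\ref{L01} by a non-periodic analog (Lemma~\ref{L07}), proved by integrating the assumed reverse inequality in $x$ over $B_2$, bounding the right-hand side by the full Gaussian mass times $\Vert \nabla u\Vert_{L^2}^2$ and the left-hand side from below by restricting the Gaussian variable to $B_{1/2}$ and invoking \eqref{EQ55}, and then running the argument of Sections~\ref{sec3}--\ref{sec5} unchanged with $x_\epsilon\in B_2$ and $|a|\le 2/\epsilon$. One minor imprecision: the tail estimate \eqref{EQ46} in Lemma~\ref{L06} uses boundedness (or at least square-integrability) of $u(\cdot,t)$ on all of $\mathbb{R}^n$, not merely local boundedness near the origin, but this is supplied by the hypotheses ($u\in W^{2,1}_\infty$ together with \eqref{EQ66}), so nothing is lost.
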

\colb

The proof is the same as in the periodic case except that
we need to modify
Lemma~\ref{L01}, stated next.
Assume, as above, that $T_0=-T$ so that 
the time interval is
$[-T,0]$.

\cole
\begin{Lemma}
\label{L07}
Let $u$ be as above with
$v$ and $w_j$ satisfying  \eqref{EQ12} and~\eqref{EQ13}. 
Let $\epsilon\in(0,T)$ be such that
  \begin{equation}
   \epsilon\le \frac{1}{C}
   \label{EQ63}
  \end{equation}
for a sufficiently large constant $C$.
Then 
  \begin{align}
   \frac{\epsilon \int_{\mathbb{R}^n} |\nabla u(x_{\epsilon} +y, -\epsilon)|^2 G_0 (y, -\epsilon) \,dy }{\int_{\mathbb{R}^n}u(x_{\epsilon} +y, -\epsilon)^2 G_0 (y, -\epsilon) \,dy} 
     \leq  
      C M 
      \epsilon q(-\epsilon)
    \label{vo12a}
  \end{align}
for some $x_{\epsilon} \in B_2$.
\end{Lemma}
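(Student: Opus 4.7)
The plan is to mimic the proof of Lemma~\ref{L01}, replacing the averaging over the periodicity cell $\Omega$ (which produced a clean constant via $\sum_{j\in\mathbb{Z}^n}\int_\Omega G_0(y+j-x,-\epsilon)\,dx=(4\pi)^{n/2}$) with an averaging over a ball that exploits the doubling hypothesis \eqref{EQ55}. Concretely, I will proceed by contradiction: suppose that for some $\lambda>0$ one has
  \begin{align*}
    \lambda \int_{\mathbb{R}^n} u(x+y,-\epsilon)^2 G_0(y,-\epsilon)\,dy
    \le
    \int_{\mathbb{R}^n} |\nabla u(x+y,-\epsilon)|^2 G_0(y,-\epsilon)\,dy
  \end{align*}
for every $x\in B_2$. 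After the translation $y\mapsto y-x$, this becomes exactly \eqref{EQ38} (with $\mathbb R^n$ in place of $\Omega$ on the right), and then I integrate in $x$ over $B_2$ (instead of $\Omega$) and exchange the order of integration as in \eqref{EQ40}--\eqref{EQ39}.

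The key quantity to analyze is the weight
  \begin{equation*}
    \phi(y)=\int_{B_2} G_0(y-x,-\epsilon)\,dx,
  \end{equation*}
which satisfies the trivial upper bound $\phi(y)\le (4\pi)^{n/2}$ for every $y$, yielding
  \begin{equation*}
    \int_{\mathbb{R}^n}|\nabla u(y,-\epsilon)|^2\phi(y)\,dy
    \le (4\pi)^{n/2} \int_{\mathbb{R}^n}|\nabla u(y,-\epsilon)|^2\,dy
    = (4\pi)^{n/2}\, q(-\epsilon)\,\|u(\cdot,-\epsilon)\|_{L^2}^2.
  \end{equation*}
For the lower bound, I restrict to $y\in B_1$; since $B_1(y)\subset B_2$ for such $y$, a change of variable $x=y+\sqrt{\epsilon}\,w$ gives
  \begin{equation*}
    \phi(y)\ge \int_{B_{1/\sqrt{\epsilon}}(0)} e^{-|w|^2/4}\,dw
    \ge \frac{(4\pi)^{n/2}}{2}
  \end{equation*}
provided $\epsilon\le 1/C$ for $C$ sufficiently large; this is precisely where the smallness assumption \eqref{EQ63} is used. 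Combining this with the doubling inequality \eqref{EQ55}, I obtain
  \begin{equation*}
    \int_{\mathbb{R}^n} u(y,-\epsilon)^2 \phi(y)\,dy
    \ge \frac{(4\pi)^{n/2}}{2} \int_{B_1} u(y,-\epsilon)^2\,dy
    \ge \frac{(4\pi)^{n/2}}{2M}\,\|u(\cdot,-\epsilon)\|_{L^2}^2.
  \end{equation*}

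Feeding these two bounds into the integrated inequality and canceling the nontrivial factor $\|u(\cdot,-\epsilon)\|_{L^2}^2$ produces $\lambda\le CM\,q(-\epsilon)$, giving a contradiction as soon as $\lambda>CM\,q(-\epsilon)$. Hence the reverse inequality must fail for some $x\in B_2$; by continuity of the quotient in $x$ (the minimum is attained on the compact set $\overline{B_2}$), we get the desired $x_\epsilon\in B_2$ satisfying \eqref{vo12a}. The only nontrivial point is the quantitative lower bound on $\phi$ on $B_1$, and this is handled cleanly by the scaling argument above once $\epsilon$ is small; the factor $M$ in the conclusion is the unavoidable price for replacing averaging over a period cell with the doubling assumption.
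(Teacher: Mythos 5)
Your proposal is correct and follows essentially the same route as the paper: integrate the assumed pointwise inequality in $x$ over $B_2$, exchange the order of integration, bound the Gaussian weight by its full mass for the gradient term, and bound it from below (using the smallness \eqref{EQ63}) on a unit-scale region so that the hypothesis \eqref{EQ55} converts $\int_{B_1}u^2$ into $\frac{1}{CM}\Vert u\Vert_{L^2}^2$, yielding $\lambda\le CMq(-\epsilon)$ and the contradiction. The only differences are bookkeeping (you bound the weight $\phi(y)$ below for $y\in B_1$, while the paper restricts $y\in B_{1/2}$ and uses $B_2(y)\supset B_1(0)$), and your closing compactness remark is unnecessary since existence of $x_\epsilon\in B_2$ follows directly from negating the universal assumption.
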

\colb

\begin{proof}[Proof of Lemma~\ref{L07}]
Assume that, for some $\lambda >0$, we have 
  \begin{align}
  \lambda 
    \int_{\mathbb{R}^n} u(x+y, -\epsilon)^2 G_0 (y, -\epsilon) \,dy 
    \leq 
    \int_{\mathbb{R}^n} |\nabla u (x+y, -\epsilon)|^2 G_0 (y, -\epsilon)\,dy 
   \comma x\in B_2
   \period
   \label{EQ58}
  \end{align}
We shall show that this cannot hold if
$\lambda \ge C M q(-\epsilon)$ for a sufficiently
large constant $C>0$.
We integrate \eqref{EQ58} in $x$ over $B_2$
and change the order of integration to obtain
  \begin{align}
   &
   \int_{{\mathbb R}^{n}}
     G_0 (y, -\epsilon) 
     \,dy
     \int_{B_2} u(x+y, -\epsilon)^2 
    \,dx
    \leq 
    \frac{1}{\lambda}
    \int_{{\mathbb R}^{n}}
     G_0 (y, -\epsilon)
     \,dy
     \int_{B_2} 
       |\nabla u (x+y, -\epsilon)|^2 
     \,dx
   \label{EQ59}
  \end{align}
Note that the right side of \eqref{EQ59} is bounded
from above
by 
  \begin{equation}
   \frac{1}{\lambda}
   \Vert \nabla u(\cdot,-\epsilon)\Vert_{L^2}^2
   \int G_0(y,-\epsilon)
   \,dy
   = \frac{(2\pi)^{n/2}}{\lambda}
   \Vert \nabla u(\cdot,-\epsilon)\Vert_{L^2}^2
   \label{EQ60}
  \end{equation}
while the left side of \eqref{EQ59} equals
  \begin{align}
   &
   \int_{{\mathbb R}^{n}}
    G_0(y,-\epsilon)\,dy
    \int_{B_2(y)}
    u(x,-\epsilon)^2 \,dx
    \nonumber\\&\indeq
    = 
   \int_{B_{1/2}}
    G_0(y,-\epsilon)\,dy
    \int_{B_2(y)}
    u(x,-\epsilon)^2 \,dx
    +
    \int_{{\mathbb R}^{n}\backslash B_{1/2}}
    G_0(y,-\epsilon)\,dy
    \int_{B_2(y)}
    u(x,-\epsilon)^2 \,dx
    \nonumber\\&\indeq
   \ge
   \int_{B_{1/2}}
    G_0(y,-\epsilon)\,dy
    \int_{B_2(y)}
    u(x,-\epsilon)^2 \,dx
   \period
   \label{EQ61}
  \end{align}
The double integral on the 
far right side is bounded from below by
  \begin{align}
   &
   \int_{B_{1/2}}
     G_{0}(y,-\epsilon)
     \,dy
     \int_{B_1(0)}
      u(x,-\epsilon)^2
     \,dx
    \nonumber\\&\indeq
    \ge
    \frac{1}{2}(2\pi)^{n/2}
    \int_{B_1(0)}
    u(x,-\epsilon)^2
    \,dx
    \ge
    \frac{1}{C M}
    \int_{{\mathbb R}^{n}}
    u(x,-\epsilon)^2
    \,dx
   \label{EQ62}
  \end{align}
where the first inequality holds
by \eqref{EQ63} if $C$ is sufficiently large
and the second one holds by \eqref{EQ55}.
Therefore, we get
  \begin{align}
   &
   \Vert u(x,-\epsilon) \Vert_{L^2}^2
   \le
   \frac{C M}{\lambda}
   \Vert \nabla u(\cdot,-\epsilon)\Vert_{L^2}
   \period
   \label{EQ65}
  \end{align}
Using
\eqref{EQ66},
we get a contradiction
if $\lambda \ge C M q(-\epsilon)$.
\end{proof}

\nnewpage
\section*{Acknowledgments} 
The authors were supported in part by the NSF grants DMS-1311943
and DMS-1615239.

\end{document}